\newcounter{obs}
\newcommand{\uddots}{\mathinner{
		\mkern1mu\raisebox{0.5em}{.}
		\mkern2mu\raisebox{1em}{.}
		\mkern2mu\raisebox{1.5em}{.}
}}
\newcommand{\uldots}{\mathinner{
		\mkern1mu\raisebox{1.5em}{.}
		\mkern2mu\raisebox{1em}{.}
		\mkern2mu\raisebox{0.5em}{.}
}}
\newtheorem{Theorem}{Theorem}
\newtheorem{Corollary}[Theorem]{Corollary}
\newtheorem{Lemma}[Theorem]{Lemma}
\newtheorem{Proposition}[Theorem]{Proposition}
\theoremstyle{Definition}
\newtheorem{Definition}[Theorem]{Definition}
\theoremstyle{remark}
\newtheorem{Remark}[Theorem]{Remark}
\newtheorem{Example}[Theorem]{Example}
\numberwithin{equation}{section}
\numberwithin{Theorem}{section}
\begin{document}

\title[Eccentric $p-$summing Lipschitz operators and integral inequalities on metric spaces and graphs]
{Eccentric $p-$summing Lipschitz operators and integral inequalities on metric spaces and graphs}

\author[Arnau, R.]{R. Arnau\orcidlink{0000-0003-2544-8875}}
\address{%
	Instituto Universitario de Matem\'atica Pura y Aplicada\\
	Universitat Polit\`ecnica de Val\`encia\\
	46022 Valencia\\
	Spain}
\email{ararnnot@posgrado.upv.es (R.A.)}

\author[~S\'{a}nchez P\'{e}rez, E.A.]{E.\,A.~S\'{a}nchez P\'{e}rez\orcidlink{0000-0001-8854-3154}}
\address{%
	Instituto Universitario de Matem\'atica Pura y Aplicada\\
	Universitat Polit\`ecnica de Val\`encia\\
	46022 Valencia\\
	Spain}
\email{easancpe@mat.upv.es (E.A.S.P.)}

\author[Sanjuan, S.]{S. Sanjuan\orcidlink{0009-0001-5310-2559}}
\address{%
	Instituto Universitario de Matem\'atica Pura y Aplicada\\
	Universitat Polit\`ecnica de Val\`encia\\
	46022 Valencia\\
	Spain}
\email{ssansil@upvnet.upv.es (S.S.)}


\keywords{Lipschitz;  metric; summability; integral inequalities; domination} 

\date{October 14, 2024}

\

\maketitle


\begin{abstract}
The extension of the concept of $p-$summability for linear operators to the context of Lipschitz operators on metric spaces has been extensively studied in recent years. This research primarily uses the linearization of the metric space $M$ afforded by the associated Arens-Eells space, along with the duality between $M$ and the metric dual space $M^\#$ defined by the real-valued Lipschitz functions on $M.$ However, alternative approaches to measuring distances between sequences of elements of metric spaces (essentially involved in the definition of $p-$summability) exist. One approach involves considering specific subsets of the unit ball of $M^\#$ for computing the distances between sequences, such as the real Lipschitz functions derived from evaluating the difference of the values of the metric from two points to a fixed point.
We introduce new notions of summability for Lipschitz operators involving such functions, which are characterized by integral dominations for those operators. To show the applicability of our results, we use in the last part of the paper the theoretical tools obtained in the first part to the analysis of metric graphs. In particular, we show new results on the behavior of numerical indices defined on these graphs satisfying certain conditions of summability and symmetry.
\end{abstract}


\section{Introduction}

The triangular inequality that satisfies every metric allows us to write another expression associated with any distance: if $(M,d)$ is a metric space, consider the expression
$$
\widehat{d}(x_1,x_2)= \sup_{y \in M} \big| d(x_1,y)-d(x_2,y) \big|, \quad x_1,x_2 \in M.
$$
Clearly, $\widehat{d}=d;$ indeed, by the triangular inequality, $\widehat{d} \le d,$ and for the other inequality just take $y=x_2.$ However, the situation changes if we compute the supremum on a smaller set $S \subseteq M,$ perhaps with some extra properties (e.g. compactness.) In this case, one can only be sure that the formula
$$
\widehat{d}_S(x_1,x_2)= \sup_{y \in S} \big| d(x_1,y)-d(x_2,y) \big|
$$
is a semi-norm, since it could happen
that $\widehat d(x_1,x_2)=0$ while $x_1 \ne x_2.$ For example, if $(M,d)$ is the Euclidean real line, $S=\{ 0\},$ $x_1=1/2$ and $x_2=-1/2.$ We will call such an expression $\widehat{d}_S$ a eccentric pseudometric associated to the distance $d.$

Eccentric pseudometrics as  $\widehat{d}_S$ have been used in \cite{arn} in the context of Lipschitz map representation to provide new definitions of summability for these maps. Some interesting questions could be raised about its own definition, such as the characterization of the subsets $S$ for which such a pseudometric $\widehat d_S$ is a metric, and when it is equivalent to $d.$ It can also be used to generate new tools for the study of the summability of Lipschitz operators, both in general metric spaces and in specific cases of special interest. We deal with this type of application in this paper, in which, after showing some new general results on summability of Lipschitz maps, we focus attention on the case of metric graphs.

Let us fix some concepts and notation. All the definitions and results needed on graph theory  are elementary; the reader can find information about in \cite{gods}. Throughout the paper we use standard  concepts  of Banach spaces, metric spaces and measure theory. A metric space $(M,d)$ is a set $M$ endowed with a function $d:M \times M \to \mathbb R^+$ satisfying that $d(x,y)=0$ if and only if $x=y,$ and for all $x,y,z \in M,$ $d(x,y)=d(y,x)$ and $d(x,z) \le d(x,y)+d(y,z)$ \cite{deza}. A pseudometric is defined in the same way, but $d(x,y)=0$ does not imply $x=y.$ A Lipschitz function $f:M \to N$ between metric spaces $(M,d_M)$ and $(N,d_N)$ is a function that satisfies for a certain constant $K>0$ that $d_N(f(x),f(y)) \le K d_M(x,y)$ for all $x,y \in M,$ and $Lip(f)$ is the infimum of all such constants $K$ \cite{cob,wilson1935}. If $X$ is a Banach space, we write $\| \cdot\|_X$ for its norm (or $\| \cdot\|$ if the space is fixed in the context), $B_X$ for its unit ball and $X^*$ for its dual space. For example, if $\mu$ is a regular Borel probability measure on the $\sigma-$algebra of Borel subsets of a metric space $M,$ we write $L^p(\mu)$ for the classical Banach lattice of all the $p-$integrable functions with respect to $\mu.$

\begin{Remark} \label{rem1}
	Let us place the pseudometrics $\widehat d_S$ defined above in the context of the classical linearization of metric spaces.
	Although we are working in the general framework of the metric spaces, for the case that $S=M,$ we can understand the formula $\widehat d$ for the computation of $d$ from the classical point of view of duality in Banach spaces.
	Indeed, if we fix a point $ z \in M $ in the metric space $ (M, d) $, we can consider an isometric embedding $ M \hookrightarrow AE(M) $. into the Arens-Eells function space (also called the free space) \cite{are,god}. The space is generated by real functions defined as the difference of characteristic functions $ \chi_{\{x\}} - \chi_{\{z\}} $. The dual space of this function space can be identified with the Banach space $ M^\# $, which consists of all real Lipschitz functions that are zero at $ z $.
	The reader can find a complete explanation of such spaces in \cite{god}. Take now the set of all Lipschitz functions as $x \mapsto f_y(x)=d(x,y)-d(y,z),$ that are clearly in the unit ball  and are $0$ in $z.$ Then, if $x_1,x_2 \in M,$ $\widehat d$ can be computed by  duality as 
	$$
	d(x_1,x_2)= \sup_{y \in M} | \langle \chi_{x_1} - \chi_{z} -\chi_{x_2} + \chi_z, f_y \rangle | =  \sup_{y \in M} | d(x_1,y)-d(x_2,y) | = \widehat d(x_1,x_2).
	$$
	Indeed, note that it is enough to take $y=x_2$ to attain the value, and by the triangular inequality we always have that $ | d(x_1,y)-d(x_2,y) | \le d(x_1,x_2)$ for every $y \in M.$ 
	
	In other words, the functions $\{f_y:y \in M\} \subset M^\#=(AE(M))^*$ are norming for the elements  $\chi_{\{x_1\}}- \chi_{\{x_2\}}$ (but not necessarily for all the elements of $AE(M)$). To see a counterexample, just consider the metric space $\{X_1,X_2,X_3,X_4\}$ endowed with the discrete metric. Then a direct computation shows that 
	$$
	\sup_{f_{X_i} \, :\, i=1,2,3,4} \Big\langle \frac{1}{2} (\chi_{\{X_1\}}-\chi_{\{X_2\}}) + \frac{1}{2} (\chi_{\{X_3\}} - \chi_{\{X_4\}}), f_{X_i} \Big\rangle = 1/2,
	$$
	while 
	$$ 
	\Big\|  \frac{1}{2} (\chi_{\{X_1\}}-\chi_{\{X_2\}}) + \frac{1}{2} (\chi_{\{X_3\}} - \chi_{\{X_4\}})  \Big\|=1.
	$$
\end{Remark}

Let us explain our approach. To do so, let us recall some fundamental issues about summability in Banach spaces. After the introduction of the powerful machinery of so-called operator ideals \cite{deflo,piets}, the central notions of summability in Banach spaces are often characterized by means of integral inequalities for operators. For example, the relation between weak convergence and norm convergence of sequences is approached from the point of view of the properties of the operators that transform weakly summable sequences into strongly summable sequences by defining the so-called $1-$summing operators \cite{deflo,djt,piets}. Recall that, given $p \ge 1$ and a sequence $(x_i)_i$ in a Banach space $(X,\| \cdot\|)$, two sequence norms can be naturally defined for it, that are the so called strong $p-$norm $\| \cdot\|_{p,s}$ and the weak $p-$norm $\| \cdot\|_{p,w}$, and are given by
$$
\|(x_i)_i\|_{p,s} = \Big( \sum_{i=1}^\infty \| x_i\|^p \Big)^{1/p}, \quad \text{and} \quad  \|(x_i)_i\|_{p,w} =\sup_{x' \in B_{X^*}} \Big( \sum_{i=1}^\infty \big| \langle x_i, x' \rangle \big|^p  \Big)^{1/p},
$$
respectively. So, a sequence is absolutely $p-$summable (also said strongly $p-$summable) when its strong $p-$norm is finite, and weakly $p-$summable if its weak norm is.
A linear operator between Banach spaces $T:X \to Y$ is absolutely $p-$summing if and only if it transforms weakly $p-$summable sequences into strong $p-$summable sequences, which is equivalent to the existence of a constant $K>0$ such that the inequality
$$
\|(T(x_i))_{i=1}^n\|_{p,s} \le K  \|(x_i)_{i=1}^n \|_{p,w}
$$
holds for every finite sequence in $X.$ Pietsch's factorization theorem \cite[Ch.1]{djt} establishes that this is equivalent to the existence of a Borel regular measure $\mu$ on $B_{X^*}$ such that the following integral domination
$$
\| T(x) \|_Y \le K \, \int_{B_{X^*}} \big| \langle x, x' \rangle \big| \, d \mu(x')
$$
holds for all $x \in X.$
For example, using the associated factorization properties for these ideal operators, it can be easily shown that a Banach space in which weakly summable sequences and strongly summable sequences coincide is necessarily finite dimensional (see \cite[Ch.1]{djt}).

Mimicking (to some extent) the situation in Banach spaces, in this paper we also face the summability problem based on integral inequalities as above but for the relation between the summability associated to the distance and the summability associated to the eccentric distance, which characterize some relevant properties regarding the summability of Lipschitz functions in general metric spaces. Let us first clarify that the translation of the operator-ideal way of understanding summability from the linear context to the Lipschitz setting has already been deeply studied in recent years \cite{ach1,ach2,cha,chen,farmer}. This research has been even extended to a broader class of non-linear operators \cite{ang}. The main definitions and results have already been adapted to this context, and some relevant theorems have been proved according to this Lipschitz version of ideal operator theory (see, e.g., \cite{ach1,farmer,fer}). Our purpose in the present paper does not follow this research trend, but the comparison of the sequence summability properties associated to the original $d$-metric and the associated $\widehat d$-metric corresponding to the adapted versions of the strong and weak norms in the new context. Let us now introduce these notions; we have not found similar definitions in the vast literature on metric spaces, so we present them as new.

Fix $p \geq 1.$ We say that two  sequences $(x^1_i)_i$ and $(x^2_i)_i$ in a metric space $(M,d)$ are $p-$absolutely close to each other if
$$
D_{p,ac} \big((x^1_i)_i,(x^2_i)_i \big) := \Big( \sum_{i=1}^\infty d(x^1_i,x^2_i)^p \Big)^{1/p} < \infty.
$$
In relation with the action of the elements of the dual space, we say that these two sequences are $p-$weakly close to each other if
$$
D_{p,wc} \big((x^1_i)_i,(x^2_i)_i \big) = \sup_{f \in B_{M^\#}} \Big( \sum_{i=1}^\infty \big| f(x^1_i)- f(x^2_i) \big|^p  \Big)^{1/p}< \infty.
$$
and that they are $p-$eccentrically close to each other if
$$
D_{p,cc} \big((x^1_i)_i,(x^2_i)_i \big) = \sup_{y \in M} \Big( \sum_{i=1}^\infty \big| d(x^1_i,y)- d(x^2_i,y) \big|^p  \Big)^{1/p}< \infty.
$$
Note that the last definition can be directly adapted to the case in which the supremum is computed over a subset $S \subseteq M;$ we will write $D^S_{p,cc}$ in this case.   Note also that, after Remark \ref{rem1}, the definition of $D^S_{p,cc}$ can be given also by restricting the set in which the supremum in $D_{p,wc}$ is computed; that is, $D^S_{p,cc} \le D_{p,wc} $ for every $S \subseteq M.$ However, we prefer to keep the different  notations $D_{p,cc}$ and $D_{p,wc}$  for clarity, due to the relevance of the sets defined by functions as the $f_y \in M^\#$ appearing in this remark.

We will analyze the fundamental relations between these three ways of understanding proximity between sequences in the next section, as well as the properties of the following two classes of operators, the study of which will be the main objective of this work.

%

Extending the definition of metric summing operator given in \cite[Def.4]{arn}, if we fix $p \geq 1$ and a subset $S \subseteq M,$ we say that a map $T:M \to N$ between metric spaces $(M,d_M)$ and $(N,d_N)$ is 
eccentric $p-$summing if there is a constant $K>0$ such that for every couple of finite sequences  $(x^1_i)_{i=1}^n$ and $(x^2_i)_{i=1}^n$ in the metric space $(M,d_M)$ we have
$$
\Big(  \sum_{i=1}^n d_N(T(x^1_i),T(x^2_i))^p \Big)^{1/p} \le K \,
\sup_{y \in S} \Big( \sum_{i=1}^n \big| d_M(x^1_i,y)- d_M(x^2_i,y) \big|^p  \Big)^{1/p},
$$
that is, $ D_{p,ac} \big((T(x^1))_{i=1}^n,(T(x^2_i))_{i=1}^n) \big) \le K \,  D_{p, cc} \big((x^1_i)_{i=1}^n,(x^2_i)_{i=1}^n) \big).$ 
This definition was made in \cite{arn} for the case that $S$ is a metric generating system, but it makes sense for any subset $S \subseteq M.$
The infimum of such constants $K$ is denoted by $MLip_p(T).$ The reader who is familiar with the ideals of linear operators and their adaptation to the case of Lipschitz maps can easily understand that this definition is inspired by the case of $1-$summing operators (see for example \cite[Ch.1]{djt}).  In fact, as said in \cite{arn}, all eccentric $1-$summing operators are Lipschitz $1-$summing in the sense of Farmer and Johnson \cite{farmer}, as can be easily seen taking into account that for any fixed $y \in M,$ the function $x \mapsto d(x,y)$ is a map belonging to the unit ball of the Banach space $M^\#$ of all real Lipschitz functions, following the notation used in \cite{farmer}.

The next definition is inspired to some extend in other classical operator ideal, the one of the $(p,q)-$mixing operators (see \cite[Ch.21]{deflo}), which has also found its analogue in the case of the Lipschitz operators \cite{cha2}. Thus, in a similar way, we say that an operator $T:M \to N$ is eccentrically $p-$approximating if  for every couple of finite sequences  $(x^1_i)_{i=1}^n$ and $(x^2_i)_{i=1}^n$ in the metric space $(M,d_M)$ we have
$$
\sup_{z \in N} \Big( \sum_{i=1}^n \big|
d_N(T(x^1_i), z) - d_N(T(x^2_i),z) \big|^p \Big)^{1/p} \le K \,
\sup_{y \in M} \Big( \sum_{i=1}^n \big| d_M(x^1_i,y)- d_M(x^2_i,y) \big|^p   \Big)^{1/p},
$$
that is, $ D_{p,cc} \big((T(x^1))_{i=1}^n,(T(x^2_i))_{i=1}^n) \big) \le K \,  D_{p,cc} \big((x^1_i)_{i=1}^n,(x^2_i)_{i=1}^n) \big).$ 
The infimum of such constants $K$ is $ELip_{p}(T).$ 

Both definitions can be directly adapted if we use eccentric metrics defined by subsets $S $ of $M$ and $B $ of $N$ instead of the direct ones, that is, with the supremum in the formulas computed over $S$ and $B$ instead of $M$ and $N,$ respectively.

\section{Absolutely close and eccentrically close sequences in metric spaces} \label{S2}

In this section we show the fundamental relationships between the various definitions of $p-$summation of sequences in metric spaces that we have introduced above. As we will see in the next section, the operators we define fulfill that they convert pairs of sequences that are similar with respect to one summation method into sequences that are similar with respect to another, stronger summation method. To understand this, we first need to know what is the natural order between these sums, to distinguish between stronger and weaker summation methods.

Given two sequences $(x^1_i)_{i=1}^n$ and $(x^2_i)_{i=1}^n$ in a metric space $(M,d),$
the definitions of absolutely close and eccentrically close are directly inspired by those of absolutely and weakly summable sequences. However, while the former leads to the corresponding notion in the theory of Banach spaces (just change $d(x^1,x^2)$ to $\||x^1-x^2\|$ in the definition), the concept of eccentrically close sequences does not coincide with the notion of weakly summable sequences. Let us fix the basic relations in the following result, and the relation to the case of Banach space in the following.

\begin{Proposition}
	Let  $(M,d)$ be a metric space and $p \ge 1,$ and
	consider two sequences $(x^1_i)_{i=1}^\infty$ and $(x^2_i)_{i=1}^\infty$ in it. Then for every $S \subseteq M,$
	$$
	D^S_{p,cc} \big((x^1_i)_i,(x^2_i)_i \big) \le  D_{p,cc} \big((x^1_i)_i,(x^2_i)_i \big) \le D_{p,wc} \big((x^1_i)_i,(x^2_i)_i \big)  \le D_{p,ac} \big((x^1_i)_i,(x^2_i)_i \big).
	$$
\end{Proposition}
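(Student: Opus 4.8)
The plan is to establish the chain of inequalities from left to right, treating each of the three inequalities separately. The first inequality $D^S_{p,cc} \le D_{p,cc}$ is immediate: passing from the supremum over $S$ to the supremum over the larger set $M$ can only increase the value, so nothing needs to be done beyond observing $S \subseteq M$.

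For the second inequality $D_{p,cc} \le D_{p,wc}$, I would exploit the observation already recorded in Remark \ref{rem1}. For each fixed $y \in M$, the function $f_y \colon x \mapsto d(x,y) - d(y,z)$ (for a fixed base point $z$) lies in the unit ball $B_{M^\#}$, since $|f_y(x) - f_y(x')| = |d(x,y) - d(x',y)| \le d(x,x')$ by the triangular inequality, and moreover $f_y(x^1_i) - f_y(x^2_i) = d(x^1_i,y) - d(x^2_i,y)$, so the $(y,z)$-shift cancels in every difference. Hence, for each $y$,
$$
\Big( \sum_{i=1}^\infty \big| d(x^1_i,y) - d(x^2_i,y) \big|^p \Big)^{1/p} = \Big( \sum_{i=1}^\infty \big| f_y(x^1_i) - f_y(x^2_i) \big|^p \Big)^{1/p} \le \sup_{f \in B_{M^\#}} \Big( \sum_{i=1}^\infty \big| f(x^1_i) - f(x^2_i) \big|^p \Big)^{1/p},
$$
and taking the supremum over $y \in M$ on the left gives $D_{p,cc} \le D_{p,wc}$.

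For the third inequality $D_{p,wc} \le D_{p,ac}$, I would argue pointwise in the index $i$: for every $f \in B_{M^\#}$ we have $|f(x^1_i) - f(x^2_i)| \le Lip(f)\, d(x^1_i,x^2_i) \le d(x^1_i,x^2_i)$. Summing the $p$-th powers over $i$, taking $p$-th roots, and then the supremum over $f \in B_{M^\#}$ yields $D_{p,wc}\big((x^1_i)_i,(x^2_i)_i\big) \le \big(\sum_i d(x^1_i,x^2_i)^p\big)^{1/p} = D_{p,ac}\big((x^1_i)_i,(x^2_i)_i\big)$. One minor point to handle cleanly is the possibility that some of these quantities are infinite, but the inequalities hold in $[0,\infty]$ without change, so no case distinction is really needed. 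I do not anticipate a genuine obstacle here; the only thing to get right is the cancellation of the base-point shift in the definition of $f_y$, which is exactly what makes the eccentric pseudometric sit below the weak one rather than being unrelated to it.
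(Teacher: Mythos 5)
Your proposal is correct and follows essentially the same route as the paper: the first inequality by enlarging the supremum set, the middle one by noting that the shifted functions $f_y$ of Remark \ref{rem1} lie in $B_{M^\#}$ (with the base-point term cancelling in differences), and the last one by the Lipschitz bound $|f(x^1_i)-f(x^2_i)|\le d(x^1_i,x^2_i)$ for $f\in B_{M^\#}$. Your explicit remark that the inequalities hold in $[0,\infty]$ is a minor tidy addition the paper leaves implicit.
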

\begin{proof}
	The first inequality is obvious. 
	After Remark \ref{rem1}, we know that all the functions  of the set $\{f_y : y \in S\}$ defined there belong to the unit ball $B_{M^\#},$ what makes the inequality in the middle of the expression obvious. The inequality in the right hand side is just a consequence of the duality of $AE(M)$ and $M^\#= Lip_0(M).$ Indeed,  for every $x^1,x^2,y \in M$ and $f \in B_{M^\#}$ we have that
	$
	| f(x^1_i)- f(x^2)| \le d(x^1_i,x^2_i), \quad i \in \mathbb N,
	$
	and so for all $y \in M^\#,$
	$$
	\sum_{i=1}^\infty \big| d(x^1_i,y)- d(x^2_i,y) \big|^p  \le  \sum_{i=1}^\infty  d(x^1_i,x^2_i)^p,
	$$
	which gives the result. 
\end{proof}

After these obvious relations, we show in what follows conditions under which the converse inequalities could hold. This is relevant because we will determine then metric spaces for which the identity map has a certain summing property, what will provide some equivalent integral average formulas for the original metric. Let us see first what happens for the linear case. In what follows, we will assume that $p$ is a fixes parameter $1 \le p < \infty.$

\begin{Remark}
	Let $(X, \| \cdot\|)$ be a Banach space. Recall that a subset $S \subseteq B_{X^*}$ is said to be norming if it is enough for computing the norm for all the elements of $X$ by duality. That is, the formula
	$$
	\sup_{x' \in S} |\langle x, x' \rangle| =\|x\|
	$$
	holds for every $x \in X.$ It is well-known (in fact it is a consequence of a direct computation), that for any sequence $(x_i)_i,$ its weak $p-$norm can be computed just using a norming subset $S$ instead of all $B_{X^*}$ (see for example \cite[Ch.2, \S 3, p. 36]{djt}). This fact has also consequences in the Lipschitz case, since in the computation of  $D_{p,wc} \big((x^1_i)_i,(x^2_i)_i \big), $ the set $B_{M^\#}$ can be substituted by any norming subset: this is just a consequence of  understanding  this expression as a dual action between the space $AE(M)$ and its dual space $M^\#.$  However, the set of all the functions defined in Remark \ref{rem1} $\{f_y : y \in M\} \subseteq B_{M^\#}$  is  in principle not norming for the whole space $AE(M)$ as shown there,  so this argument cannot be used, although of course it is norming for the functions $\chi_{\{x_1\}} - \chi_{\{x_2\}}.$  
\end{Remark}

In order to facilitate the finding of applications (for example, for finite graphs, see Section \ref{sec4}), we extend the notion of norming to the following: we say that a subset $S \subseteq B_{X^*}$ of a Banach space $(X,\| \cdot\|)$ is  $k-$norming for short if there is a constant $k>0$ such that 
$$
\|x\| \le k \, \sup_{x' \in S} |\langle x, x' \rangle |, \quad x \in X.
$$
(Of course, the inequality $ \sup_{x' \in S} |\langle x, x' \rangle | \le \|x\|$ always holds.)

\begin{Proposition}
	Let $(M,d)$ be a  pointed metric space with distinguished point $0$, and
	consider two sequences $(x^1_i)_{i=1}^n$ and $(x^2_i)_{i=1}^n$ in it. Then, if the set $\mathcal F_S=\{f_y = d(\cdot,y) -d(y,0): y \in S \subset M\} \subseteq B_{M^\#}$ is $k-$norming for a certain constant $k,$ we have
	$$
	D^S_{p,cc}((x^1_i)_{i=1}^n,(x^2_i)_{i=1}^n) 
	\le D_{p,wc}((x^1_i)_{i=1}^n,(x^2_i)_{i=1}^n) 
	\le k \, D^S_{p,cc}(x^1_i)_{i=1}^n,(x^2_i)_{i=1}^n).
	$$ 
\end{Proposition}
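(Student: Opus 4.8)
The plan is to treat the two inequalities separately. The left-hand one, $D^S_{p,cc}((x^1_i),(x^2_i)) \le D_{p,wc}((x^1_i),(x^2_i))$, is already contained in the chain $D^S_{p,cc} \le D_{p,cc} \le D_{p,wc}$ established above, because each $f_y \in \mathcal F_S$ belongs to $B_{M^\#}$ by Remark \ref{rem1}; so all the work is in the right-hand inequality, and that is exactly where the $k$-norming hypothesis must be used.

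First I would linearize through the Arens--Eells space. Put $w_i := \chi_{\{x^1_i\}} - \chi_{\{x^2_i\}} = (\chi_{\{x^1_i\}}-\chi_{\{0\}}) - (\chi_{\{x^2_i\}}-\chi_{\{0\}}) \in AE(M)$, so that $\langle w_i, f\rangle = f(x^1_i) - f(x^2_i)$ for every $f \in M^\# = (AE(M))^*$; in particular $\langle w_i, f_y\rangle = d(x^1_i,y) - d(x^2_i,y)$, the offsets $d(y,0)$ cancelling. With this notation $D_{p,wc}((x^1_i),(x^2_i))$ is precisely the weak $p$-norm $\|(w_i)_{i=1}^n\|_{p,w}$ of the finite sequence $(w_i)$ in $AE(M)$, while $D^S_{p,cc}((x^1_i),(x^2_i)) = \sup_{f_y \in \mathcal F_S}\big(\sum_i |\langle w_i, f_y\rangle|^p\big)^{1/p}$.

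Next I would invoke the standard $\ell^{p'}$-duality description of the weak $p$-norm ($1/p+1/p'=1$, with $\ell^{p'}=\ell^\infty$ when $p=1$): for any finite sequence $(w_i)$ in a Banach space $X$,
$$
\|(w_i)_{i=1}^n\|_{p,w} = \sup\Big\{ \Big\| \sum_{i=1}^n a_i w_i \Big\|_X : (a_i)_{i=1}^n \in B_{\ell^{p'}} \Big\},
$$
which follows by interchanging the supremum over $B_{X^*}$ with the one over $B_{\ell^{p'}}$. Now for each fixed $(a_i) \in B_{\ell^{p'}}$, applying the $k$-norming property of $\mathcal F_S$ to the single vector $\sum_i a_i w_i \in AE(M)$ and then Hölder's inequality in the index $i$ gives
$$
\Big\| \sum_{i=1}^n a_i w_i \Big\|_{AE(M)} \le k \sup_{f_y \in \mathcal F_S} \Big| \sum_{i=1}^n a_i \langle w_i, f_y\rangle \Big| \le k \sup_{f_y \in \mathcal F_S} \Big( \sum_{i=1}^n |\langle w_i, f_y\rangle|^p \Big)^{1/p} = k\, D^S_{p,cc}((x^1_i),(x^2_i)).
$$
Taking the supremum over $(a_i) \in B_{\ell^{p'}}$ and using the displayed duality formula yields $D_{p,wc}((x^1_i),(x^2_i)) = \|(w_i)\|_{p,w} \le k\, D^S_{p,cc}((x^1_i),(x^2_i))$, as required.

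This is essentially a routine transfer of the classical fact that a $k$-norming subset computes the weak $p$-norm up to the factor $k$, so no deep obstacle is expected. The only points needing care are the interchange of the two suprema (over $(a_i) \in B_{\ell^{p'}}$ and over $y \in S$) and the verification that the constant terms $d(y,0)$ in the definition of $f_y$ genuinely cancel in $\langle w_i, f_y\rangle$, so that the right-hand side is really $D^S_{p,cc}$ and not $D_{p,wc}$ in disguise; both are immediate once the $AE(M)$-identification above is in place.
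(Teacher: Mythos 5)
Your proof is correct and follows essentially the same route as the paper: both arguments linearize through $AE(M)$, rewrite $D_{p,wc}$ via $\ell^{p'}$-duality as a double supremum, interchange the suprema, and apply the $k$-norming property of $\mathcal F_S$ to the linear combination $\sum_i \lambda_i(\chi_{\{x^1_i\}}-\chi_{\{x^2_i\}})$. Your version merely makes explicit (via H\"older and the cancellation of the offsets $d(y,0)$) the steps the paper leaves implicit.
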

\begin{proof}
	It is just a consequence of writing the summation expressions in terms of duality.
	Indeed, note that the first inequality is obvious. For the second one, just write $p'$ for the extended real number that satisfies $1/p+1/p'=1,$ $\ell_{p'}$ the classical space of sequences of real numbers with finite $p'$ with the corresponding norm, and notice that
	$$
	D_{p,wc}((x^1_i)_{i=1}^n,(x^2_i)_{i=1}^n) =
	\sup_{f \in B_{M^\#}} \,\, 
	\sup_{(\lambda_i)_{i=1}^n \in B_{\ell^{p'}}}
	\Big\langle \sum_{i=1}^n \lambda_i \big(\chi_{\{x_i^1\}} - \chi_{\{x_i^2\}} \big), f \Big\rangle 
	$$
	$$
	\le k \, \sup_{f_y \in \mathcal F_S} \,\,\, \sup_{(\lambda_i)_{i=1}^n \in B_{\ell^{p'}}}
	\Big\langle \sum_{i=1} ^n \lambda_i \big(\chi_{\{x_i^1\}} - \chi_{\{x_i^2\}} \big), f_y \Big\rangle = k\,  D^S_{p,cc}((x^1_i)_{i=1}^n,(x^2_i)_{i=1}^n).
	$$
\end{proof}

\vspace{0.5cm}

\section{Factorization of eccentric $p-$summing and eccentrically $p-$approximating Lipschitz operators } \label{secfac}

In this section we show the main characterizations of the classes of operators that we defined in the introduction, as well as the properties that can be derived from them. As will be seen, the definition of these operators involves the uniform transformation of pairs of sequences that are summable with respect to a method of those explained in Section \ref{S2}, to a pair of sequences that are summable with respect to a stronger method of those also appearing in Section \ref{S2}. The characterizations of the resulting operators are given mainly by integral inequalities, which can be translated into factorization theorems for the Lipschitz operators involved.

The techniques used here are in some sense related to those employed in \cite{rodsan}, that analyze suitable formulas to represent metrics as integral averages as the ones involved in the so called Wasserstein space \cite[\S 2.1]{pan}. Also, a particular case of eccentric $1-$summing operators were studied in \cite{arn} (metric summing operators in this reference). 
For the case of  Banach spaces, one of the first questions that were faced in the original paper on Lipschitz $p-$summing operators \cite{farmer} was connected with the problem of what happens if the Lipschitz  map involved is also linear. It was proved there that 
in this case, to be Lipschitz $p-$summing and being absolutely  $p-$summing was the same \cite[Th.2]{farmer}.  In this section we compare also different types of summability for  Lipschitz operators, and we show some structural applications for general metric spaces. In the next section we will analyze the concrete case of infinite metric graphs as an example of application, and also as a proposal of new tool for studying symmetry notions for graphs. The next theorem, that is the consequence of a classical separation argument that appears often in results on factorization of operators, is the main result of this section.

\begin{Remark}
	Note that the requirements on the operators that appear in all the results of this section imply that they are in particular Lipschitz. This is the reason why we do not ask in the statements of the propositions that the operators are Lipschitz, we simply write that they are maps. The proofs that they are all Lipschitz are always consequences of direct computations.
\end{Remark}

%
%

Let us write now the main characterization of the domination of Lipschitz functionals in terms of the summability properties that they satisfy. In other words, the class of functionals that are characterized below (eccentric $p-$summing functionals) carry $p-$eccentrically close couples of (finite) sequences in a metric space to $p-$absolutely close couples of sequences in a uniform way, and the uniform constant relating the corresponding summations is the operator eccentric $p-$summing norm $L_p(T).$

Let us remark that, for passing from a summability property of a map involving a eccentric $p-$pseudometric to an integral domination we need the set used to compute it (the set that we denote by $S \subseteq M$ in the previous sections) to be compact for the metric topology of $(M,d).$ To underline this fact, we use in the following results the letter $K$ (which usually denotes compact sets) for $S.$

\begin{Theorem} \label{pietfunc}
	Let $(M,d)$ be a metric space and $K$ a compact subset of $M$. 
	Let  $ p \ge 1,$ $f: M \to \mathbb R$ be a mapping and $C > 0.$ Then the  following statements are equivalent.
	\begin{enumerate}
		\item[(i)] For any $n \in \mathbb N$, $x_1, x_2, \ldots, x_n, y_1, y_2, \ldots, y_n \in M$,
		\begin{equation*}
			\Big( \sum_{i=1}^n |f(x_i)-f(y_i)|^p\Big)^{1/p} \leq C \sup_{w \in K} \Big(  \sum_{i=1}^n | d(x_i, w) - d(y_i, w) |^p \Big)^{1/p}.
		\end{equation*}
		\item[(ii)] There exists a Borel regular probability measure $\mu$ on $K$ such that for any $x, y \in M$
		\begin{equation*}
			|f( x)- f(y) | \leq C \Big( \int_K | d(x,w) - d(y,w) |^p \,  d\mu(w) \Big)^{1/p}.
		\end{equation*}
	\end{enumerate}
	A real function satisfying (i) or (ii) is Lipschitz with constant $ \le C.$ Following the general notation introduced before, we call such a function a eccentric $p-$summing functional.
\end{Theorem}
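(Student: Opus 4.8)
The plan is to establish the equivalence via a standard Pietsch-type separation argument, running (ii)$\Rightarrow$(i) as a direct estimate and (i)$\Rightarrow$(ii) through a Hahn–Banach/Ky Fan minimax separation on the space of continuous functions on $K$. First I would dispose of the easy direction (ii)$\Rightarrow$(i): assuming the integral domination, raise both sides to the $p$-th power, sum over $i=1,\dots,n$, interchange the finite sum with the integral, and bound the integrand pointwise by $\sup_{w\in K}\sum_i|d(x_i,w)-d(y_i,w)|^p$; taking $p$-th roots and using that $\mu$ is a probability measure yields (i) with the same constant $C$.

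For the main direction (i)$\Rightarrow$(ii), I would set up the usual convex-sets argument in $C(K)$, the space of real continuous functions on the compact metric space $K$. For each choice of finitely many pairs $(x_i,y_i)_{i=1}^n$ in $M$, define the function $\phi$ on $K$ by
\[
\phi(w) = \sum_{i=1}^n |f(x_i)-f(y_i)|^p - C^p \sum_{i=1}^n |d(x_i,w)-d(y_i,w)|^p .
\]
Note $\phi \in C(K)$ because $w\mapsto d(x_i,w)$ is continuous (it is $1$-Lipschitz). Let $\mathcal{C}\subseteq C(K)$ be the collection of all such functions $\phi$ as the data range over all finite families. Hypothesis (i), after raising to the $p$-th power, says exactly that $\max_{w\in K}\phi(w)\ge 0$ is false only in the sense that $\min$ over $\mu$ of the integral is controlled — more precisely, (i) says $\sup_{w\in K}\big(C^p\sum_i|d(x_i,w)-d(y_i,w)|^p\big) \ge \sum_i|f(x_i)-f(y_i)|^p$, i.e. every $\phi\in\mathcal{C}$ satisfies $\max_{w\in K}\phi(w)\ge 0$. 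One checks $\mathcal{C}$ is a convex cone (scaling by positive reals is absorbed by rescaling the data, and adding two such functions corresponds to concatenating the two finite families), and it is disjoint from the open convex cone $\mathcal{N}=\{g\in C(K): \max_{w\in K} g(w) < 0\}$ of strictly negative functions. By the Hahn–Banach separation theorem there is a nonzero bounded linear functional on $C(K)$ separating them; by the Riesz representation theorem this functional is (up to normalization) a regular Borel probability measure $\mu$ on $K$, and positivity of $\mu$ follows because $\mathcal{C}$ contains $-\mathcal{N}$-type directions or, more cleanly, because $\mathcal{N}$ contains all strictly negative constants. The separation inequality $\int_K \phi\, d\mu \ge 0$ for every $\phi\in\mathcal{C}$, specialized to a single pair $(x,y)$ (so $n=1$), gives $|f(x)-f(y)|^p \le C^p \int_K |d(x,w)-d(y,w)|^p\, d\mu(w)$; taking $p$-th roots yields (ii). I expect the main technical obstacle here to be the careful verification that $\mathcal{C}$ is genuinely a convex cone together with the correct bookkeeping of the normalization so that the separating measure comes out as a \emph{probability} measure rather than merely a nonnegative one; compactness of $K$ is what guarantees $\mathcal{C}\subseteq C(K)$ and that Riesz representation applies, which is precisely why the hypothesis insists on $K$ compact.

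Finally, for the closing assertion that any $f$ satisfying (i) (equivalently (ii)) is Lipschitz with constant $\le C$: apply (ii) with arbitrary $x,y\in M$ and bound the integrand by its supremum, namely $|d(x,w)-d(y,w)|\le d(x,y)$ by the triangle inequality, valid for every $w\in K$. Hence $|f(x)-f(y)|\le C\big(\int_K d(x,y)^p\, d\mu(w)\big)^{1/p} = C\, d(x,y)$, since $\mu$ is a probability measure. This step is immediate and requires no further work. One may also note that, alternatively, the Lipschitz bound follows directly from (i) by taking $n=1$ and using $\sup_{w\in K}|d(x,w)-d(y,w)|\le d(x,y)$.
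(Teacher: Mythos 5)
Your (ii)$\Rightarrow$(i) direction and the closing Lipschitz remark are fine, and your overall strategy (a separation argument in $C(K)$, which is the Hahn--Banach counterpart of the Ky Fan minimax argument the paper actually uses) is the right kind of argument. However, the separation step as written contains a genuine sign error that breaks the proof. With your
$\phi(w)=\sum_i|f(x_i)-f(y_i)|^p - C^p\sum_i|d(x_i,w)-d(y_i,w)|^p$,
hypothesis (i) gives $\min_{w\in K}\phi(w)\le 0$, \emph{not} $\max_{w\in K}\phi(w)\ge 0$: for instance take $f\equiv 0$, $K=\{w_0\}$ and one pair $x,y$ with $d(x,w_0)\ne d(y,w_0)$; then (i) holds trivially while $\phi$ is strictly negative on all of $K$, so $\mathcal{C}$ is \emph{not} disjoint from your cone $\mathcal{N}$ of strictly negative functions and the claimed separation does not exist. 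Moreover, even if you had a probability measure with $\int_K\phi\,d\mu\ge 0$ for all $\phi\in\mathcal{C}$, specializing to $n=1$ would give $|f(x)-f(y)|^p\ \ge\ C^p\int_K|d(x,w)-d(y,w)|^p\,d\mu(w)$, i.e.\ the \emph{reverse} of (ii); the inequality you wrote down does not follow from your separation. The correct setup is to separate $\mathcal{C}$ from the open convex cone of strictly \emph{positive} functions (disjointness now follows from $\min_{w}\phi\le 0$), obtain a positive normalized measure with $\int_K\phi\,d\mu\le 0$ for every $\phi\in\mathcal{C}$, and then read off (ii).

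A second, smaller gap: your justification of the cone/convexity of $\mathcal{C}$ --- ``scaling by positive reals is absorbed by rescaling the data'' --- is not available in a general metric space, where points cannot be rescaled. To make $\mathcal{C}$ convex (which the separation genuinely needs) one must first upgrade (i) to its weighted form $\sum_i a_i|f(x_i)-f(y_i)|^p\le C^p\sup_{w\in K}\sum_i a_i|d(x_i,w)-d(y_i,w)|^p$ for arbitrary $a_i>0$, obtained by approximating the $a_i$ by rationals and repeating pairs; this is exactly the preliminary step in the paper's proof (following Farmer--Johnson). After that step the paper runs the minimax directly via Ky Fan's lemma on the weak*-compact set of probability measures on $K$, with functions $\psi_A(\mu)$ built from the weighted data, rather than via Hahn--Banach in $C(K)$; the two routes are equivalent, but yours only becomes correct once the sign of the separating cone is fixed and the weighted inequality is established.
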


\begin{proof}  The proof is an application of a classical separation argument that is found in many factorization theorems. We will use here Ky Fan's Lemma instead of the argument based on the Hahn-Banach Theorem that is usual. 
	
	$(i) \Rightarrow (ii)$ First note that, taking into account that the same elements $x_i,y_i$ could appear several times in the inequalities in \textit{(i)}, using rational number approximation we can see that these inequalities can be extended to a larger class of inequalities involving also positive constants such as the following, where $x_1, x_2, \ldots, x_n, y_1, y_2, \ldots, y_n \in M$ are general finite sequences of elements of $M$ and $a_1, a_2 \ldots, a_n > 0,$
	\begin{equation} \label{eqenproof}
		\sum_{i=1}^n a_i | f( x_i)- f( y_i)|^p \leq C \sup_{w \in K} \sum_{i=1}^n a_i | d(x_i, w) - d(y_i, w) |^p .
	\end{equation}
	The interested reader can find an explanation of this argument in \cite{farmer}).
	
	Let us recall Ky Fan's minimax Lemma (the one used in analysis, there is another famous Ky Fan's Lemma on  labellings of triangulations that generalizes Tucher's Lemma). 
	In particular, a family $\mathcal F$ of real functions is concave if every (finite) convex combination of elements of the family gives again an element of $\mathcal F.$
	Ky Fan's Lemma (see for example  \cite[Ch.9]{djt}) estates 
	that,
	\begin{itemize}
		\item if $W$ is a compact convex subset of a Hausdorff topological vector
		space and 
		
		\item  $\Psi$ is a concave family of lower semi-continuous, convex, $\mathbb R-$valued
		functions on $W,$ 
		
		\item and  there is $c \in \mathbb R$ satisfying that  for every $\psi \in \Psi,$ there exists $x_\psi \in  W
		$ such that $\psi(x_\psi) \le c,$ 
	\end{itemize}
	then there exists $x \in W$ such that $\psi(x) \le c$ for every $\psi \in \Psi.$

	Let us show the proof. Fix $f$ be a real function as the one in the statement of the theorem. Since the set $K$ is compact in $M,$ we can define the space of continuous functions on it, $C(K),$ which dual space is the space of Borel regular measures $\mathcal M(K).$
	Define  for any subset of $M \times M \times [0, + \infty[$ as $A = \big\{ (x_1, y_1, a_1), (x_2, y_2, a_2), \ldots, (x_n, y_n, a_n) \big\} \subseteq M \times M \times [0, + \infty[,$ $n \in \mathbb N,$ the function $\psi_A : \mathcal M(K)  \to \mathbb R $ given by 
	$$
	\psi_A(\mu)=   \sum_{i=1}^n a_i \, |f(x_i)- f( y_i)|^p - \int_{K} \sum_{i=1}^n a_i \, C^p | d(x_i,w) - d(y_i,w) |^p  \, d \mu(w) .
	$$	
	Consider the dual pair $\langle  C(K), \mathcal M(K)\rangle,$ and consider the space $\mathcal M(K)$ endowed with its weak* topology. The function $\psi_A$ is clearly continuous, since for every fixed $x \in M,$ the function $w \mapsto d(x,w)$ is Lipschitz continuous, so in particular the $p-$th power appearing in the function belongs to $C(K).$ The integral represents the dual action, so it works by duality and is continuous for the weak* topology of $ \mathcal M(K).$ Due to the multiplication by arbitrary scalars that appears in the definition, the family of all these functions is concave, and each of them is also convex. 
	
	On the other hand, there is always a measure $\mu_A$ that satisfies that $\psi_A(\mu_A) \le 0.$ Indeed, by compactness the supremum appearing in the corresponding inequality given in Equation (\ref{eqenproof}) is always attained at a certain point $w_A,$ so it is enough to consider $\mu_A= \delta_{\{w_A\}}$ to get this requirement. Now, we apply Ky Fan's Lemma for $c=0$ to obtain a fixed  measure $\mu$ such that
	$$
	\sum_{i=1}^n a_i \, |f(x_i)- f( y_i)|^p \le \int_{K} \sum_{i=1}^n a_i \, C^p | d(x_i,w) - d(y_i,w) |^p  \, d \mu(w)
	$$
	for all the functions of the family. In particular, for every $x,y \in M,$
	$$
	\big| f(x)- f(y) \big|^p \le C^p \, \int_K   | d(x_i,w) - d(y_i,w) |^p  \, d \mu(w).
	$$

	$(ii) \Rightarrow (i)$
	If there is probability measure $\mu$ as in $ (ii),$ taking into account that $\mu$ is a probability measure and computing the power $p$ in the inequality we get
	\begin{align*}
		\sum_{i=1}^n  | f(x_i)- f(y_i) |^p
		& \leq C^p \, \int_K \left( \sum_{i=1}^n | d(x_i,w) - d(y_i,w) |^p \right) d\mu(w) \\
		& \leq  C^p \,  \sup_{t \in K} \sum_{i=1}^n | d(x_i,t) - d(y_i,t) |^p,
	\end{align*}
	so $(i)$ is obtained.

\end{proof}

This result can be written also as a commutative factorization  for the Lipschitz functional $f$ through a subset of $L^p(\mu),$ as 
$$
\xymatrix{
	M \ar@{.>}[d]_{i_d}  \ \ar@{.>}[r]^{f} & \ \mathbb R \,\, ,\\
	L^p(\mu) \supset S \,\,\,\,\,\,\,\,\,\,\,\,\,\,\,\,\,\,\,\, \ \ar@{.>}[ru]_{\widehat f}}
$$
where $i_d$ is a Lipschitz map given by $i_d(x)(\cdot)= d( x,\cdot) \in L^p(\mu),$ $S$ is the set of all the classes of  functions as $d(x,\cdot)$ that are equal $\mu-$a.e.  and $\widehat f$ is also a real-valued Lipschitz functional on $S$ given by $\widehat f(i_d(x))=f(x).$ Indeed, note that for $x,y \in M,$ we have that the difference of the functions $w \mapsto i_d(x)$ and $w \mapsto i_d(y)$ satisfies
$$
\Big( \int_K | d(x,w) - d(y,w) |^p \,  d\mu(w) \Big)^{1/p} 
$$
$$
\le  \Big( \int_K \sup_{w \in K} | d(x,w) - d(y,w) |^p \,  d\mu(w) \Big)^{1/p} \le \mu(K) \, \sup_{w \in K} | i(x)(w) - i(y)(w) | \le d(x,y),
$$ 
and the continuity of $\widehat f$ is assured by the inequality in Theorem \ref{pietfunc} (2). The functions $i_d$ and $\widehat f$ are Lipschitz.

\vspace{0.5cm}

\begin{Remark} \label{remp}
	It can be easily seen that the same proof of Theorem \ref{pietfunc}  works for eccentric $p-$summing operators if the 
	differences $|f(x_i)-f(y_i)|$ are substituted by $d(x_i,y_i)$ in the left hand side of the inequality in the statement of this theorem. Recall that an operator $T: M \to N$ is eccentric $p-$summing for a compact set $K \subset M$ if there is a constant $R>0$
	such that for every couple of finite sequences  $(x^1_i)_{i=1}^n$ and $(x^2_i)_{i=1}^n$ in the metric space $(M,d_M)$ we have
	$$
	\Big(  \sum_{i=1}^n d_N(T(x^1_i),T(x^2_i))^p \Big)^{1/p} \le R \,
	\sup_{w \in K} \Big( \sum_{i=1}^n \big| d_M(x^1_i,w)- d_M(x^2_i,w) \big|^p  \Big)^{1/p},
	$$
	or, if we write it in terms of sequence metrics, 
	$$
	D_{p,ac} \big((T(x^1))_{i=1}^n,(T(x^2_i))_{i=1}^n) \big) \le R \cdot  D_{p, cc} \big((x^1_i)_{i=1}^n,(x^2_i)_{i=1}^n) \big).$$ 
	
	Following the arguments in the proof of Theorem \ref{pietfunc}, we find that this is equivalent to the integral domination
	$$
	d(x,y) \le R \, \Big( \int_K | d(x,w) - d(y,w) |^p d\mu_f (w) \Big)^{1/p} 
	$$
	for all $x,y \in K.$ This provides an extension of Theorem 2 in \cite{arn} for the $p-$th power case.
\end{Remark}

The previous results can also be written when a weak-type metric for sequences is put in the left hand side of the inequality. For example, if we consider a mapping $T : M \to N$ and $f$ belongs to a certain subset $D$ of Lipschitz functions on $N,$ the set of compositions $\{ f \circ T: f \in D\}$ gives the following application of Theorem \ref{pietfunc}. Note that the supremum in the left can also be read as: for every $f \in D,$ the inequality in Theorem \ref{pietfunc} works for $f \circ T.$

\begin{Corollary} \label{pietsup}
	Let $(M,d), (N,\rho)$ be two metric spaces and $K$ a compact subset of $M$.
	Let $T : M \to N$ a mapping and $C > 0,$ and let $D \subseteq Lip_0(N).$ The following statements are equivalent.
	\begin{enumerate}
		\item For any $n \in \mathbb N$, $x_1, x_2, \ldots, x_n, y_1, y_2, \ldots, y_n \in M$,
		\begin{equation*}
			\sup_{f \in D} \Big( \sum_{i=1}^n |f \circ T (x_i) - f \circ T (y_i) |^p \Big)^{1/p}  \leq C \sup_{w \in K} \Big( \sum_{i=1}^n | d(x_i, w) - d(y_i, w) |^p \Big)^{1/p} .
		\end{equation*}
		\item For every $f \in D$  there exists a Borel regular probability measure $\mu_f$ on $K$ such that for any $x, y \in M$
		\begin{equation*}
			|f \circ T (x) -  f \circ T (y) |  \leq C  \Big( \int_K | d(x,w) - d(y,w) |^p d\mu_f (w) \Big)^{1/p} .
		\end{equation*}
	\end{enumerate}
	That is, if $T$ satisfies any of these statements, then it is eccentrically $p-$approximating.
\end{Corollary}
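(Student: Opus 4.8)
The plan is to deduce everything from Theorem~\ref{pietfunc}, applied separately, for each $f\in D$, to the real-valued map $g=f\circ T\colon M\to\mathbb R$; no fresh separation argument is needed, since the functional version has already done all the work.

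For $(1)\Rightarrow(2)$ I would fix $f\in D$ and simply note that
$$
\Big(\sum_{i=1}^n|f\circ T(x_i)-f\circ T(y_i)|^p\Big)^{1/p}\le\sup_{g\in D}\Big(\sum_{i=1}^n|g\circ T(x_i)-g\circ T(y_i)|^p\Big)^{1/p},
$$
so hypothesis (1) says precisely that $f\circ T$ satisfies statement (i) of Theorem~\ref{pietfunc} with the same constant $C$ and the same compact set $K$. That theorem then yields a Borel regular probability measure $\mu_f$ on $K$ with $|f\circ T(x)-f\circ T(y)|\le C\big(\int_K|d(x,w)-d(y,w)|^p\,d\mu_f(w)\big)^{1/p}$ for all $x,y\in M$, which is exactly (2). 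Here $f\circ T$ is merely a map $M\to\mathbb R$, so the theorem applies verbatim and its Lipschitz character is part of the conclusion. Conversely, for $(2)\Rightarrow(1)$ I would run the easy implication $(ii)\Rightarrow(i)$ of Theorem~\ref{pietfunc} for each $f\in D$ separately, obtaining $\sum_{i=1}^n|f\circ T(x_i)-f\circ T(y_i)|^p\le C^p\sup_{w\in K}\sum_{i=1}^n|d(x_i,w)-d(y_i,w)|^p$. The one point deserving attention is that the right-hand side is independent of $f$; hence one may take the supremum over $f\in D$ on the left and then extract $p$-th roots to recover (1). This interchange is the only genuinely non-mechanical step, and it is legitimate exactly because the dominating term carries no $f$.

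To conclude that a $T$ satisfying (1)--(2) is eccentrically $p$-approximating, I would specialize $D$ to a family of the form $\mathcal F_B=\{f_z=\rho(\cdot,z)-\rho(z,0):z\in B\}\subseteq B_{N^\#}$ with $B\subseteq N$ (taking $N$ pointed, as in Remark~\ref{rem1}); these functions lie in the unit ball of $N^\#$ and vanish at the base point. Since $|f_z\circ T(x)-f_z\circ T(y)|=|\rho(T(x),z)-\rho(T(y),z)|$, statement (1) for $D=\mathcal F_B$ becomes
$$
\sup_{z\in B}\Big(\sum_{i=1}^n|\rho(T(x_i),z)-\rho(T(y_i),z)|^p\Big)^{1/p}\le C\sup_{w\in K}\Big(\sum_{i=1}^n|d(x_i,w)-d(y_i,w)|^p\Big)^{1/p},
$$
that is, $D^B_{p,cc}\big((T(x^1_i))_i,(T(x^2_i))_i\big)\le C\,D^K_{p,cc}\big((x^1_i)_i,(x^2_i)_i\big)$, which is precisely the defining inequality for an eccentrically $p$-approximating operator in the version adapted to the subsets $K\subseteq M$ and $B\subseteq N$ (and, with $B=N$, $K=M$, the original one). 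The overall argument is routine; there is no real obstacle beyond the book-keeping just described.
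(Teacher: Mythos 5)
Your proposal is correct and follows essentially the same route as the paper: the paper also obtains the corollary by reading the supremum in (1) as ``for every $f\in D$'' and applying Theorem~\ref{pietfunc} verbatim to each composition $f\circ T$, with the $f$-independence of the right-hand side giving the converse direction. Your extra step of specializing $D$ to the eccentric family $\{\rho(\cdot,z)-\rho(z,0):z\in B\}\subseteq B_{N^\#}$ to justify the ``eccentrically $p$-approximating'' conclusion is exactly the intended (though left implicit) reading in the paper.
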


\vspace{0.5cm}

This result can be also written in terms of factorizations. If $T : M \to N$ is a mapping, we can draw the following scheme for a given real functional $f,$
$$
\xymatrix{
	M\ar[rr]^{T} \ar@{.>}[d]_{i_d} & & N  \ar@{.>}[d]_{f}\\
	S  \ar@{.>}[d]_{i}  \ar@{.>}[rr]^{\widehat f}&  & \mathbb R \\
	L^p(\mu) & &  }
$$
where  $i$ is a inclusion map and  $i_d$ is the Lipschitz map given by $i_d(x)(\cdot)= d( x,\cdot) \in L^p(\mu)$ that was explained in the factorization associated to Theorem \ref{pietfunc}; also the set    $S$ was explained there, as well as $\widehat f.$  It can be easily seen that all the maps in this commutative factorization diagram are Lipschitz.

\vspace{0.5cm}

\begin{Remark}
	In the case that we use the adaptation of this notion for general Lipschitz maps $T:M \to N,$ we find the definition of eccentrically $p-$approximating operator. Assume that $(M, d_M)$ and $(N, d_N)$ are metric spaces, and $T:M \to N$ is a map. Let us recall the definition of eccentrically $p-$approximating operators associated to a pair of compact subsets $K_1$ and $K_2.$
	Suppose that the following inequalities are satisfied for a constant $Q>0$ and every finite sequences of elements 
	$x^1_1,...,x^1_n$ and $x^2_1,...,x^2_n$ in $M$
	$$
	\sup_{y \in K_2} \Big(
	\sum_{i=1}^n\big| d_N(T(x^1_i),y)-d_N(T(x^2_i),y) \big|^p \Big)^{1/p}
	\le
	Q \, \sup_{x \in K_1} \Big(
	\sum_{i=1}^n  \big| d_M(x^1_i,x)-d_M(x^2_i,x) \big|^p \Big)^{1/p},
	$$
	where $K_1 \subset M$ and $K_2  \subset N$ are compact subsets.

	Note now that for every probability measure $\mu_N \in \mathcal M(K_2),$ the inequality above clearly gives the domination 
	$$
	\sum_{i=1}^n  \int_N  \big| d_N(T(x^1_i),y)-d_N(T(x^2_i),y) \big|^p \, d \mu_N(y) \le Q^p
	\, \sup_{x \in K_1} \Big(
	\sum_{i=1}^n  \big| d_M(x^1_i,x)-d_M(x^2_i,x) \big|^p \Big)^{1/p}.
	$$
	
	The same argument that proves Theorem \ref{pietfunc} writing 
	$$
	\left( \int_N  | d_N(T(x_1),y)-d_N(T(x_2),y) |^p \, d \mu_N(y) \, \right)^{1/p}
	$$ 
	instead of $|f(x^1_i)- f(x^2_i)|$
	shows that there is a probability measure $\mu_M  \in \mathcal M(K_1)$ such that
	$$
	\int_N  \big| d_N(T(x_1),y)-d_N(T(x_2),y) \big|^p \, d \mu_N(y) \le Q^p
	\int_M  \big| d_M(x_1,x)-d_M(x_2,x) \big|^p \, d \mu_M(x)
	$$
	for all $x_1,x_2 \in M.$ This gives the desired characterization for eccentrically $p-$approximating operators by means of integral inequalities.
\end{Remark}

\vspace{0.5cm}

\section{Applications: distance summing inequalities in metric graphs}  \label{sec4}

Having developed the core theoretical framework for eccentric $p-$summing domination, we move now to apply these results within the context of metric undirected graphs. All the notions and results that we use on graphs are elementary. The reader can find all the information needed about graphs and distances in graphs for example in \cite{god} and \cite{buck,deza}, respectively.

Consider a weighted connected undirected graph with a countable set of vertices endowed with the weighted shortest path distance $q.$  We need first to clarify the type of  graphs we will deal with. We will consider the case in which, although there is an infinite set of vertices in the graph, all of them are connected by a finite path. This finiteness of the paths is relevant, since given a certain $p \ge 1,$  we can use the usual formula for the $p-$weighted shortest path distance (positive weights) without the need of considering infinite paths. That is, given a couple of vertices $v_1$ and $v_2$ in the graphs, there is a finite number of paths connecting them, so the shortest path $p-$distance can be defined as usual by
$$
q_p(v_1,v_2)= \inf  \Big\{  \Big( \sum_{i=1}^{n-1} w(v^i, v^{i+1})^p \Big)^{1/p} : \text{the sequence $(v^i)_i$ connects $v^1=v_1$ and $v^n=v_2$} \Big\},
$$
where $w(v^i,v^{i+1})$ denotes the weight of the edge connecting both vertices.

\begin{Example} \label{exfin1}
	A canonical example of the type of structure we have fixed is a graph composed of vertices that are identified with the elements of an increasing convergent sequence in $\mathbb R$ with its boundary $s$ (which does not belong to the sequence), in which the connectivity is given by the adjacency on the real line and the distance (weight) between consecutive vertices is the size of the segment connecting them. Furthermore, the boundary vertex $s$ is assumed to be connected to all other vertices, and the weights for the distance are again the size of the segment connecting it to the other points in the sequence.
	
	It can be easily seen that the distance of the shortest path between any two vertices is in this case again the size of the segment connecting the two vertices. This (infinite) graph is clearly compact, since it is isomorphic and isometric to the convergent sequence with its limit. 
	\begin{figure}[ht]
		\centering
		\begin{tikzpicture}
			\node[draw, circle] (n1) at (-6, 0) {$v_0$};
			\node[draw, circle] (n2) at (0, 0) {$v_1$};
			\node[draw, circle] (n3) at (3, 0) {$v_2$};
			\node[draw=none] (n4) at (4.5, 0) {$\ldots v_i \ldots$};
			\node[draw, circle] (n5) at (6, 0) {$s$};
			
			\draw[-] (n1) -- (n2) node[midway, below] {$\frac{1}{2}$};
			\draw[-] (n2) -- (n3) node[midway, below] {$\frac{1}{4}$};
			\draw[-] (n3) -- (n4) node[midway, below] {$\frac{1}{8}$};
			\draw[-] (n4) -- (n5) node[midway, below] {$\frac{1}{2^i}$};
			
			\draw[bend left=60] (n1) to node[midway, above] {$1$} (n5);
			\draw[bend left=60] (n2) to node[midway, above] {$\frac{1}{2}$} (n5);
			\draw[bend left=60] (n3) to node[midway, below] {$\frac{1}{4}$} (n5);
		\end{tikzpicture}
		\caption{A canonical example using the sequence $(1-1/2^i)_i$ with its limit, $1.$ The vertices $v_i$ represent the element $\{1-1/2^i\}$ of the convergent sequence, and $s$ represents the boundary vertex, that is, $s = \lim_i v_i.$ The edge values are weights between the vertices.}
		\label{fig:grfini1}
	\end{figure}
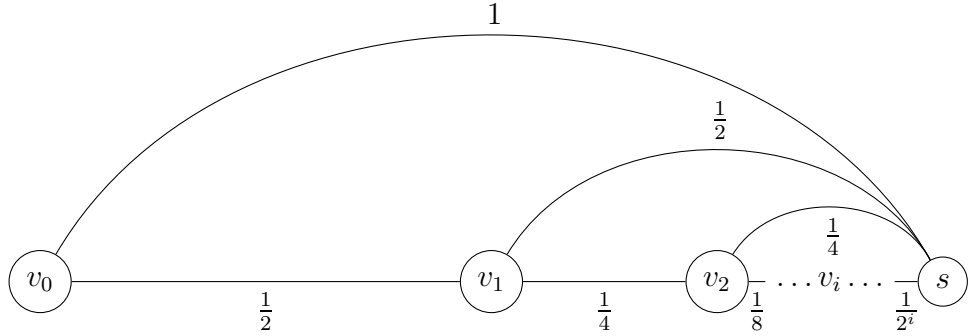
	
\end{Example}

\subsection{Weighted $p-$shortest  path metrics an integral $p-$averages  on graphs}

Following the idea that leads to the definition of general path metrics, we can use the integrals appearing in the characterization theorems explained in Section \ref{secfac} to introduce a new family of pseudometrics that are closely related to the metric symmetry on graphs. Broadly speaking, we will compute the shortest paths between two vertices in a graph considering the distance between two adjacent vertices given by an integral $p-$average as the ones appearing in previous section. We will focus the attention on the weighted $p-$shortest path metrics, which will be used in the next section, although some of the results presented here hold for general metrics on graphs.

Our main motivation is that in metric modeling, indices on graphs are often defined as real Lipschitz functions acting on the set of vertices of a given graph.
The interested reader can find many such applications, for example, in subjects as different as chemistry \cite{das} and network analysis in the social sciences \cite{bran}. 

Let us fix first some definitions.

\begin{Definition}
	Two points $x_1,x_2$ in a metric space $(M,D)$ are metrically symmetric with respect to a certain set $S$ if $d(x_1,y)=d(x_2,y)$ for every $y \in S.$ 
\end{Definition}

This property will be characterized in what follows in terms of domination by an integral $p-$average. Let us show first some other metric definitions and the relation between them and some already defined functions.

Let $G=(V,E,d),$ be a(n) (undirected connected) metric graph $G=(V,E,d).$   Recall that we assume that all the vertices of the graph can be connected by a finite path.  Let us define below a generalized version of the $p-$weighted shortest path metric for a graph starting from any distance $d.$ 

\begin{Definition}
	For $p \ge 1,$  we define the $p-$shortest path pseudodistance between two vertices $x_1,x_2 \in V$ associated to the metric $d$ by
	$$
	d_{p}(v_1,v_2) := 
	\inf \Big\{ \Big( \sum_{i=1}^{n-1}  d(v^i,v^{i+1})^p \, d \mu \Big)^{1/p}: \text{path}
	\, v_1=v^1\to v^2, \cdots, v^{n-1} \to v^n=v_2
	\Big\},
	$$
	where $v^{i} \to v^{i+1}$ means that $v^{i}$ and $v^{i+1}$ are connected. 
\end{Definition}

It can be easily seen that, in general, we cannot assure that $d_p \le d$ for any distance $d.$ Indeed, consider the (countable infinite) graph in which $V$ is identified with the set $\mathbb N$ of natural numbers, and the adjacent vertices are only the consecutive natural numbers. Consider the discrete distance in it ($d(v_1,v_2)=1$ whenever $v_1 \ne v_2.$) If we compute the pseudometric $d_p$ between $1 \in \mathbb N$ and $n \in \mathbb N,$ we have that $d_p(1,n)=(n-1)^{1/p},$ while $d(1,n)=1.$ Since $n$ is arbitrary, we get that this inequality cannot follow in general.

However, for the case when $d$ is a weighted shortest path metric $q_r,$ $r \ge 1,$ we always  get this inequality for the adequate values of $p.$ We assume that the weights $W = \{w(v_i,v_j): v_i \, \text{adjacent to} \, v_j \}$ between adjacent vertices of the graph make $q_r$ a metric, and not just a pseudometric.

\begin{Lemma} \label{lem1p}
	Fix $ 1 \le r \le p.$
	For every graph $G=(V,E),$ endowed with a weighted path metric $q_r,$ we have 
	$$
	(q_r)_{p}(v_1,v_2) \le q_r(v_1,v_2), \quad v_1, v_2 \in V.
	$$
\end{Lemma}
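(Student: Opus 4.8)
The plan is to reduce the inequality to two elementary facts: that a single edge is an admissible path in \emph{both} infima, and that the $\ell_p$-norm of a finite nonnegative vector is dominated by its $\ell_r$-norm whenever $r \le p$.

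First I would fix $v_1, v_2 \in V$ and, given $\varepsilon > 0$, select a path $v_1 = v^1 \to v^2 \to \cdots \to v^n = v_2$ in $G$ that is $\varepsilon$-optimal for $q_r$, i.e. with $\big( \sum_{i=1}^{n-1} w(v^i,v^{i+1})^r \big)^{1/r} \le q_r(v_1,v_2) + \varepsilon$. (Since there are only finitely many paths joining $v_1$ and $v_2$ the infimum is in fact attained, so one could take $\varepsilon = 0$; the $\varepsilon$-version is harmless.) The very same path is admissible in the infimum defining $(q_r)_p$, hence $(q_r)_p(v_1,v_2) \le \big( \sum_{i=1}^{n-1} q_r(v^i,v^{i+1})^p \big)^{1/p}$.

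Next I would bound each term $q_r(v^i,v^{i+1})$ by $w(v^i,v^{i+1})$: the single edge from $v^i$ to $v^{i+1}$ is itself a path, so by the definition of $q_r$ we get $q_r(v^i,v^{i+1}) \le w(v^i,v^{i+1})$. This upgrades the previous estimate to $(q_r)_p(v_1,v_2) \le \big( \sum_{i=1}^{n-1} w(v^i,v^{i+1})^p \big)^{1/p}$. Finally, applying the inclusion $\ell_r \hookrightarrow \ell_p$ (valid because $r \le p$) to the finite nonnegative vector $(w(v^i,v^{i+1}))_{i=1}^{n-1}$ gives $\big( \sum_{i=1}^{n-1} w(v^i,v^{i+1})^p \big)^{1/p} \le \big( \sum_{i=1}^{n-1} w(v^i,v^{i+1})^r \big)^{1/r} \le q_r(v_1,v_2) + \varepsilon$. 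Chaining these inequalities and letting $\varepsilon \to 0$ yields the claim.

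There is no serious obstacle here; the only points that require a moment's care are checking that the two infima range over the same family of admissible paths (they do, by the definitions given) and noting that the comparison $q_r(v^i,v^{i+1}) \le w(v^i,v^{i+1})$ remains valid even if there is a shorter route between those two adjacent vertices through the rest of the graph — we only need an \emph{upper} bound, and the edge itself furnishes one. If anything, the "hard part" is bookkeeping: keeping track of which exponent ($p$ or $r$) sits in which sum, and making sure the $\ell_r \hookrightarrow \ell_p$ inequality is used in the correct direction (the larger exponent gives the smaller norm).
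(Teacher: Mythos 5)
Your proposal is correct and follows essentially the same route as the paper's own proof: choose an $\varepsilon$-optimal path for $q_r$, use $q_r(v^i,v^{i+1})\le w(v^i,v^{i+1})$ on each edge, apply the monotonicity $\bigl(\sum a_i^p\bigr)^{1/p}\le\bigl(\sum a_i^r\bigr)^{1/r}$ for $r\le p$, and let $\varepsilon\to 0$. The only difference is cosmetic: you make explicit the (harmless) observations that the same path is admissible in both infima and that the edge itself witnesses the bound $q_r\le w$, which the paper leaves implicit.
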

\begin{proof}
	Take $\varepsilon >0$ and a path $v_1=v^1 \to v^2 \to \cdots v^{n-1} \to v^n=v_2$ such that
	$$
	\Big( \sum_{i=1}^{n-1}  w(v^i,v^{i+1})^r \Big)^{1/r} \le q_r(v_1,v_2) + \varepsilon,
	$$
	and note that, since by definition for adjacent vertices $v^i$ and $v^{i+1}$ we have $q_r(v^i,v^{i+1}) \le w(v^i,v^{i+1}),$  we get by the inequality above
	$$
	(q_r)_{p}(v_1,v_2) := 
	\inf \Big\{ \Big( \sum_{i=1}^{n-1}  q_r(v^i,v^{i+1})^p \, d \mu \Big)^{1/p}: \text{path}
	\, v_1=v^1\to v^2, \cdots, v^{n-1} \to v^n=v_2
	\Big\}
	$$
	$$
	\le 
	\Big( \sum_{i=1}^{n-1}  w(v^i,v^{i+1})^p  \Big)^{1/p}.
	$$
	Thus, taking into account that $r \le p,$ we obtain
	$$
	(q_r)_{p}(v_1,v_2) \le \Big( \sum_{i=1}^{n-1}  w(v^i,v^{i+1})^p  \Big)^{1/p} \le \Big( \sum_{i=1}^{n-1}  w(v^i,v^{i+1})^r  \Big)^{1/r}  \le q_r(v_1,v_2) + \varepsilon.
	$$
	Since this happens for every $\varepsilon >0,$ we get the result.
	
\end{proof}

It can be easily seen that for $p=r$ we get $(q_p)_p=q_p,$ and so the inequality in Lemma \ref{lem1p} becomes an equality.

Let us define below the associated $p-$average notion of the weighted shortest path distance for a graph starting from any distance $d.$ Let $S \subset V$ be a compact subset of  vertices of a(n) (undirected connected) metric graph $G=(V,E,d).$ 

\begin{Definition}
	For $p \ge 1$ and  a probability measure $\mu \in \mathcal M(S),$  we define the $p-$shortest path pseudodistance between two vertices $v_1,v_2 \in V$ by
	$$
	d_{p,\mu}(v_1,v_2) := 
	$$
	$$
	\inf \Big\{ \Big( \sum_{i=1}^{n-1} \int_S \big| d(v^i,w)-d(v^{i+1},w) \big|^p \, d \mu \Big)^{1/p}: \text{path}
	\, v_1=v^1\to v^2, \cdots, v^{n-1} \to v^n=v_2
	\Big\}
	$$
	$$
	= \inf_{pahts} \left\| \Big( \sum_{i=1}^{n-1} \big| d(v^i,\cdot)-d(v^{i+1},\cdot) \big|^p  \Big)^{1/p} \right\|_{L^p(\mu)},
	$$
	where $v^{i} \to v^{i+1}$ means that $v^{i}$ and $v^{i+1}$ are connected.
\end{Definition}

\begin{Lemma} \label{lem2p}
	For every metric graph $G=(V,E,d),$ every $p \ge 1$  and every  probability measure 
	$\mu \in \mathcal M(S),$ 
	$$
	d_{p,\mu}(v_1,v_2) \le d_p(v_1,v_2), \quad v_1, v_2 \in V.
	$$
\end{Lemma}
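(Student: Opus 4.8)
The plan is to reduce the claimed inequality to an edge-by-edge comparison along a single path and then to pass to the infimum. The only ingredients are the triangle inequality in $(M,d)$ and the fact that $\mu$ is a probability measure on $S$, so the argument is short and essentially computational.

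First I would fix $v_1,v_2\in V$ and an arbitrary path $v_1=v^1\to v^2\to\cdots\to v^{n-1}\to v^n=v_2$. For each consecutive pair $v^i,v^{i+1}$ and each $w\in S$, the triangle inequality gives $|d(v^i,w)-d(v^{i+1},w)|\le d(v^i,v^{i+1})$. Raising to the $p$-th power and integrating over $S$ against $\mu$, together with $\mu(S)=1$, yields
$$
\int_S \big| d(v^i,w)-d(v^{i+1},w) \big|^p \, d\mu \le d(v^i,v^{i+1})^p .
$$
Summing over $i=1,\dots,n-1$ and taking $p$-th roots gives
$$
\Big( \sum_{i=1}^{n-1} \int_S \big| d(v^i,w)-d(v^{i+1},w) \big|^p \, d\mu \Big)^{1/p} \le \Big( \sum_{i=1}^{n-1} d(v^i,v^{i+1})^p \Big)^{1/p}.
$$

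Next I would note that the left-hand side is, by definition, one of the quantities over which the infimum defining $d_{p,\mu}(v_1,v_2)$ is computed, so $d_{p,\mu}(v_1,v_2)$ is at most that quantity; combined with the previous display, $d_{p,\mu}(v_1,v_2)\le\big(\sum_{i=1}^{n-1} d(v^i,v^{i+1})^p\big)^{1/p}$ for every path joining $v_1$ and $v_2$. Taking the infimum over all such paths on the right-hand side produces precisely $d_p(v_1,v_2)$, which finishes the proof.

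There is no real obstacle here: the argument is a one-line estimate per edge followed by monotonicity of the infimum. The only point deserving a word of care is that $\mu$ is a \emph{probability} measure rather than merely a finite one — this is exactly what makes the comparison constant equal to $1$; with a general finite $\mu$ one would instead pick up a factor $\mu(S)^{1/p}$ on the right-hand side.
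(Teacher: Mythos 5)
Your proof is correct and follows essentially the same route as the paper: triangle inequality on each edge, the probability condition $\mu(S)=1$ to bound the integral by $d(v^i,v^{i+1})^p$, and then passing to the infimum over paths. Your version just spells out the termwise comparison before taking infima, which the paper compresses into one display.
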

\begin{proof}
	This is a consequence of a direct calculation. Indeed, note that by the triangular inequality for $d,$ and taking into 
	account that  $\mu$ is a probability 
	measure, we obtain
	$$
	\inf \Big\{ \Big( \sum_{i=1}^{n-1} \int_S \big| d(v^i,w)-d(v^{i+1},w) \big|^p \, d \mu \Big)^{1/p}: \text{path}
	\, v_1=v^1\to v^2, \cdots, v^{n-1} \to v^n=v_2
	\Big\}
	$$
	$$
	\le \inf \Big\{ \Big( \sum_{i=1}^{n-1}  d(v^i,v^{i+1})^p \, d \mu \Big)^{1/p}: \text{path}
	\, v_1=v^1\to v^2, \cdots, v^{n-1} \to v^n=v_2
	\Big\} = d_p(v_1,v_2).
	$$
\end{proof}

A straightforward consequence of Lemmas \ref{lem1p} and \ref{lem2p} is the following

\begin{Proposition}
	Let $S \subset V$ be a compact subset of a weighted undirected connected graph $G=(V,E).$ Fix $1 \le r \le p.$ Then for  every  probability measure 
	$\mu \in \mathcal M(S),$ 
	$$
	(q_r)_{p,\mu}(v_1,v_2) \le (q_r)_p(v_1,v_2) \le q_r(v_1,v_2), \quad v_1, v_2 \in V.
	$$
\end{Proposition}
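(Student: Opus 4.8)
The plan is to observe that this Proposition is nothing more than the concatenation of the two preceding lemmas, once the hypotheses are matched up correctly. No new argument is needed; the work is purely bookkeeping.

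First I would apply Lemma \ref{lem2p} with the underlying distance $d$ taken to be the weighted shortest path metric $q_r$. Since we are assuming that the weights $W$ make $q_r$ an honest metric (not merely a pseudometric) on $V$, the hypotheses of Lemma \ref{lem2p} are satisfied for the metric graph $G=(V,E,q_r)$, for the fixed exponent $p \ge 1$, and for any probability measure $\mu \in \mathcal M(S)$ on the compact vertex subset $S$. This yields immediately the left-hand inequality
$$
(q_r)_{p,\mu}(v_1,v_2) \le (q_r)_p(v_1,v_2), \quad v_1,v_2 \in V.
$$

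Next I would invoke Lemma \ref{lem1p}, whose hypothesis $1 \le r \le p$ is exactly the one assumed here, to obtain the right-hand inequality $(q_r)_p(v_1,v_2) \le q_r(v_1,v_2)$ for all $v_1,v_2 \in V$. Chaining the two displayed inequalities gives the claimed chain
$$
(q_r)_{p,\mu}(v_1,v_2) \le (q_r)_p(v_1,v_2) \le q_r(v_1,v_2), \quad v_1,v_2 \in V,
$$
which completes the proof.

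There is no real obstacle here: the only thing to be careful about is that the two lemmas are applied with compatible data — Lemma \ref{lem2p} needs $q_r$ to be a genuine metric and $\mu$ a probability measure on a compact set $S$, both of which are part of the standing assumptions, and Lemma \ref{lem1p} needs the exponent restriction $1 \le r \le p$, which is hypothesized. I would therefore keep the proof to a couple of lines, simply citing the two lemmas and noting that the hypotheses transfer verbatim.
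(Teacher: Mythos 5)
Your proof is correct and follows exactly the paper's route: the paper states the Proposition as a straightforward consequence of Lemmas \ref{lem1p} and \ref{lem2p}, which is precisely the concatenation you carry out (Lemma \ref{lem2p} applied with $d=q_r$ for the left inequality, Lemma \ref{lem1p} for the right). Nothing further is needed.
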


\subsection{Eccentric $p-$summing domination for functionals on graphs and metric symmetry}
In this section we show how domination of particular real Lipschitz functions can be used for the characterization of indices of metric symmetry on graphs. 
Let us motivate this application with two examples.

\begin{Example}
	Consider a graph defined by the following points of $\mathbb R^2$ as vertices. Take $v_1=(-1,0),$ $v_2=(1,0)$ and $v_k = a_{k-2}$ for $k \ge 2,$  where $(a_k)_{k=1}^\infty$ is a convergent sequence formed by elements as $(0,s_k)$ together with is limit $(0,s_\infty) \in \mathbb R^2.$ Suppose that the connectivity is given as in Example \ref{exfin1} for the elements of the sequence and its limit, and the elements $v_1$ and $v_2$ are connected with all the elements of the sequence, and with its limit too. The weights for the metric are again the size of the segments connecting every couple of adjacent vertices. It is clearly a compact graph with the weighted shortest path metric.
	\begin{figure}[ht]
		\centering
		\begin{tikzpicture}
			\node[draw, circle] (n1) at (-6, 0) {$v_3$};
			\node[draw, circle] (n2) at (0, 0) {$v_4$};
			\node[draw, circle] (n3) at (3, 0) {$v_5$};
			\node[draw=none] (n4) at (4.5, 0) {$\ldots v_k \ldots$};
			\node[draw, circle] (n5) at (6, 0) {$s$};
			\node[draw, circle] (n6) at (-6, -4) {$v_1$};
			\node[draw, circle] (n7) at (-6, 4) {$v_2$};
			
			\draw[-] (n1) -- (n2) node[midway, below] {};
			\draw[-] (n2) -- (n3) node[midway, below] {};
			\draw[-] (n3) -- (n4) node[midway, below] {};
			\draw[-] (n4) -- (n5) node[midway, below] {};
			\draw[-] (n6) -- (n1) node[midway, below] {};
			\draw[-] (n6) -- (n2) node[midway, below] {};
			\draw[-] (n6) -- (n3) node[midway, below] {};
			\draw[-] (n6) -- (n5) node[midway, below] {};
			\draw[-] (n7) -- (n1) node[midway, below] {};
			\draw[-] (n7) -- (n2) node[midway, below] {};
			\draw[-] (n7) -- (n3) node[midway, below] {};
			\draw[-] (n7) -- (n5) node[midway, below] {};

			\draw[bend left=60] (n1) to node[midway, below] {} (n5);
			\draw[bend left=60] (n2) to node[midway, above] {} (n5);
			\draw[bend left=60] (n3) to node[midway, below] {} (n5);
		\end{tikzpicture}
		\caption{As in Figure \ref{fig:grfini1} of Example \ref{exfin1}, in this graph the convergent sequence $(a_k)_{k=1}^\infty$ is formed by the elements $(0, 1-1/2^k)_k$ together with its limit $s = (0, 1).$ In this figure the weights of the graph are not represented.}
		\label{fig:grfini2}
	\end{figure}
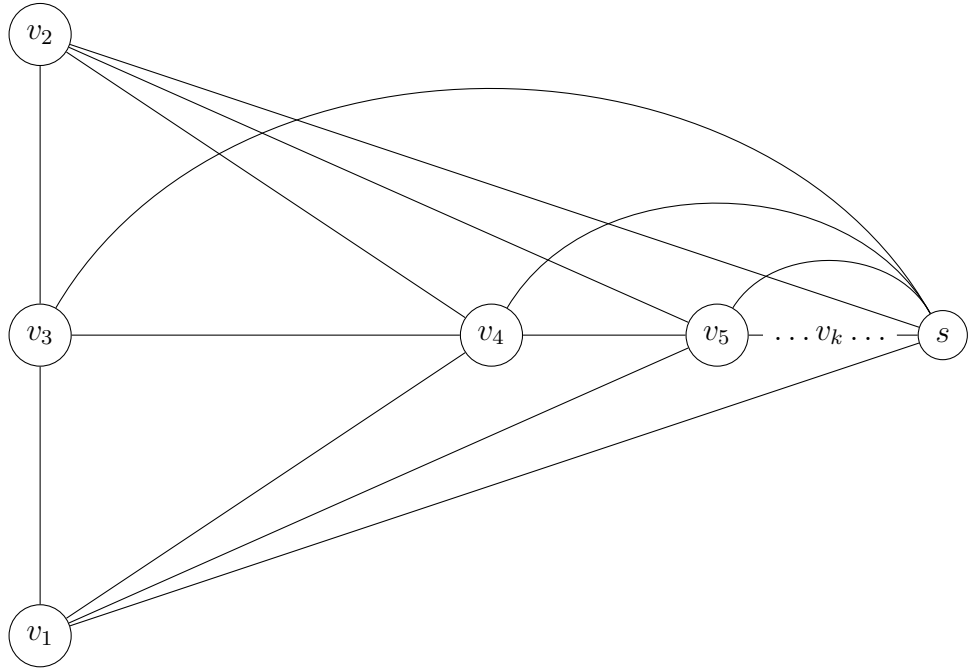
	
	Take the Borel regular measure $\mu$ on the whole set $V$ of vertices of the graph that is written as a series 
	$$
	\mu = \sum_{i=3}^\infty \alpha_i \delta_{\{v_i\}},
	$$
	where $\delta_{v_i}$ is the Dirac measure on the vertex $v_i$ and $\alpha_i$ are non-negative real numbers such that $\sum_{i=3}^\infty \alpha_i=1.$
	
	Suppose that a real valued Lipschitz function $f:V \to \mathbb R$ satisfies a domination as
	$$
	\big| f(v_i)- f(v_j) \big| \le K \int \big| d(v_i,v)- d(v_j,v) \big| \, d \mu(v).
	$$
	Note that this implies that $f(v_1)=f(v_2),$ since by symmetry $d(v_1,v)= d(v_2,v)$ for all $v=v_i$ for $i=3,4,...$, and for $v=v_1$ or $v=v_2,$  $\big| d(v_i,v)- d(v_j,v) \big|=1,$ but $\mu(\{v\})=0$ for these points. This means that all Lipschitz functionals dominated by such an integral are symmetric, in the sense that $f(v_1)=f(v_2).$
\end{Example}

\begin{Example}
	A noncountable graph. 
	Consider all the points of the circle of radius $1$ together with its center, that is supposed to be the point $v_0=(0,0),$ as a subset of $\mathbb R^2.$ Suppose that all the points of the circle are connected, and also with the center $v_0.$ Consider the sets of weights $w(v_i,v_j)$ given by the size of the arc connecting two points of the circle, and $w(v_0,v_i)=1$ for all the points $v_i$ in the circle. This is a metric graph with the shortest path metric for which the distance $d(v_i,v_j)$ equals again the size of the arc connecting them if they belong to the circle, and $1$ if one of the points involved is $v_0.$
	\begin{figure}[ht]
		\centering
		\begin{tikzpicture}
			\draw (-2.9,-0.3) arc[start angle=184, end angle=270, radius=2.9];
			\draw (0,-3) arc[start angle=-90, end angle=0, radius=3];
			\draw (3,0) arc[start angle=0, end angle=83, radius=3];
			
			\node[draw, circle] (v0) at (0,0) {$v_0$};
			\node[draw, circle] (v1) at (-3,0) {$v_i$};
			\node[draw, circle] (v2) at (-2.12,2.12) {$v_j$};
			\node[draw, circle] (v3) at (0,3) {$v_k$};
			
			\node[draw=none] (n4) at (-1.5,-1.5) {$\uldots$};
			\node[draw=none] (n5) at (1.5,-1.5) {$\uddots$};
			\node[draw=none] (n6) at (1.5,1.5) {$\ddots$};
			\node[draw=none] (n7) at (0,-2) {$\cdots$};
			\node[draw=none] (n8) at (2,0) {$\vdots$};
			
			\draw (v0) -- (v1) node[midway, above] {$1$};
			\draw (v0) -- (v2) node[midway, above] {$1$};
			\draw (v0) -- (v3) node[midway, right] {$1$};
			\draw (v0) -- (n4) node[midway, below] {$1$};
			\draw (v0) -- (n5) node[midway, below] {$1$};
			\draw (v0) -- (n6) node[midway, below] {$1$};
			
			\draw (v1) -- (v3) node[midway, above] {$\frac{\pi}{2}$};
			\draw (v1) -- (n4) node[midway, above] {};
			\draw (v3) -- (n6) node[midway, above] {};
			
			
			\draw (-2.9,0.3) arc[start angle=176,end angle=144,radius=2.9] node[midway, left] {$\frac{\pi}{4}$};
			\draw (-1.83,2.33) arc[start angle=130,end angle=98,radius=2.9] node[midway, above] {$\frac{\pi}{4}$};

		\end{tikzpicture}
		\caption{Graph on the circle with center $v_0$ and arc weights.}
		\label{fig:circlegraph}
	\end{figure}
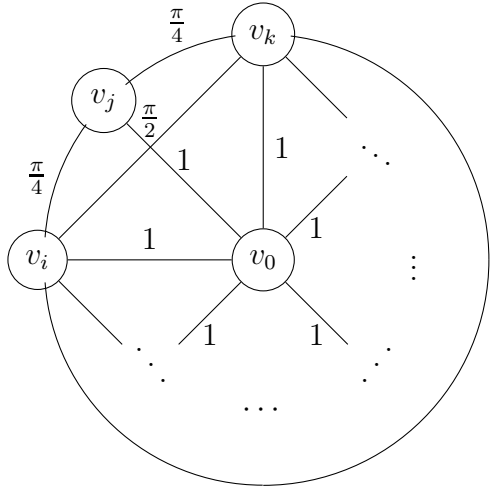

	Take the measure $\mu= \delta_{\{v_1\}}.$ Then  every Lipschitz functional dominated as
	$$
	\big| f(v_i)- f(v_j) \big| \le K \int \big| d(v_i,v)- d(v_j,v) \big| \, d \delta_{\{v_0\}} (v)
	$$
	for any $K>0$ satisfies that $f(v_i)=f(v_j)=Q$ for all $v_i, v_j$ in the unit circle (that is, it takes a constant value $Q$), and $|f(v_0)-f(v_i)| \le K,$  that is, $Q-K \le f(v_0) \le Q +K.$
\end{Example}

These examples show that the integral domination gives information about the properties of the functionals on certain paths connecting concrete vertices in a graph. The problem we face in this section is to give conditions under which a Lipschitz functional satisfies that  given two vertices there is a path connecting them that satisfy that the functional is almost constant, or at least its $p-$th variation is controlled.

Let $G=(V,E)$ be a weighted graphs with weights $W.$
Given a functional $f:V \to \mathbb R,$ we define its $p-$best path estimate as
$$
E_p(f)(v_1,v_2):=
$$
$$
\inf \Big\{ \Big( \sum_{i=1}^{n-1}  \big| f(v^i)-f(v^{i+1}) \big|^p \, d \mu \Big)^{1/p}: \text{path}
\, v_1=v^1\to v^2, \cdots, v^{n-1} \to v^n=v_2
\Big\}
$$
for every $v_1, v_2 \in V.$

\begin{Definition}
	Let $G=(V,E)$ be a weighted graphs with weights $W.$
	We say that a functional $f:V \to \mathbb R$ satisfies a $p-$th power integral estimate if there is a constant $Q>0,$ 
	a compact subset $S \subset V$ and a probability measure $\mu \in \mathcal M(S)$ such that for every $v_1, v_2 \in V,$
	$$
	E_p(f)(v_1,v_2) \le Q \, d_{p,\mu}(v_1,v_2).
	$$
\end{Definition}

Using the tools developed in other sections, next result shows how are the functionals that satisfy such definition.

\begin{Theorem} \label{t2}
	Let $f:V \to \mathbb R$ be a  
	Let $G=(V,E)$ be a weighted graphs with weights $W.$
	Let $f:V \to \mathbb R$ be a Lipschitz map. Then
	\begin{itemize}
		
		\item[1] $ E_p(f)(v_1,v_2) \le Lip(f)  \, d_p(v_1,v_2)$ for all $v_1, v_2 \in V.$ 
		
		\item[2] If $f$ is
		eccentric $p-$summing functional, then there is a compact subset $S$ and a probability measure $\mu \in \mathcal M(S)$ such that $f$ satisfies a $p-$th power integral estimate
		$$
		E_p(f)(v_1,v_2) \le C(f)  \, d_{p,\mu}(v_1,v_2).
		$$
		That is,  for every $v_1, v_2 \in V,$
		$$
		\inf \Big\{ \Big( \sum_{i=1}^{n-1}  \big| f(v^i)-f(v^{i+1}) \big|^p \, d \mu \Big)^{1/p}: \text{path}
		\, v_1=v^1\to v^2, \cdots, v^{n-1} \to v^n=v_2   \Big\}
		$$
		$$
		\le
		C(f) \,
		\inf \Big\{ \Big( \sum_{i=1}^{n-1} \int_S \big| d(v^i,w)-d(v^{i+1},w) \big|^p \, d \mu(w) \Big)^{1/p}: \text{path}
		\, v_1 \to v^2, \cdots,  \to v_2  \Big\},
		$$
		where $C(f)$ is the eccentric $p-$summing constant of $f.$
	\end{itemize}
\end{Theorem}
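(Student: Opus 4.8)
The strategy is to handle the two items in turn; neither is deep, and both reduce to a per-path estimate that is then pushed through the infimum defining $E_p$ and $d_{p,\mu}$. For item 1, I would take an arbitrary path $v_1 = v^1 \to v^2 \to \cdots \to v^n = v_2$ and apply the Lipschitz bound $|f(v^i) - f(v^{i+1})| \le Lip(f)\, d(v^i,v^{i+1})$ to each consecutive pair; raising to the $p$-th power, summing in $i$, and taking $p$-th roots gives
$$
\Big(\sum_{i=1}^{n-1}|f(v^i)-f(v^{i+1})|^p\Big)^{1/p} \le Lip(f)\,\Big(\sum_{i=1}^{n-1} d(v^i,v^{i+1})^p\Big)^{1/p}.
$$
Since the left-hand side is at least $E_p(f)(v_1,v_2)$, taking the infimum over all paths on the right yields $E_p(f)(v_1,v_2) \le Lip(f)\, d_p(v_1,v_2)$.

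For item 2, the key input is the equivalence $(i)\Leftrightarrow(ii)$ of Theorem \ref{pietfunc}, applied with the metric graph $(V,E,d)$ in the role of the metric space $M$. By hypothesis $f$ is an eccentric $p$-summing functional, so it satisfies $(i)$ of Theorem \ref{pietfunc} for some compact $K\subset V$ with constant $C(f)$; hence by $(ii)$ there is a Borel regular probability measure $\mu$ on $S:=K$ with
$$
|f(x)-f(y)| \le C(f)\,\Big(\int_S |d(x,w)-d(y,w)|^p\, d\mu(w)\Big)^{1/p}, \qquad x,y\in V.
$$
Now fix any path $v_1=v^1\to\cdots\to v^n=v_2$, apply this inequality to each pair $(v^i,v^{i+1})$, raise to the $p$-th power and sum in $i$ to obtain
$$
\sum_{i=1}^{n-1}|f(v^i)-f(v^{i+1})|^p \le C(f)^p\sum_{i=1}^{n-1}\int_S |d(v^i,w)-d(v^{i+1},w)|^p\, d\mu(w).
$$
Taking $p$-th roots shows that the $f$-path-sum is bounded by $C(f)$ times the integral quantity whose infimum over paths is exactly $d_{p,\mu}(v_1,v_2)$; since the left-hand side dominates $E_p(f)(v_1,v_2)$, passing to the infimum over paths gives the $p$-th power integral estimate $E_p(f)(v_1,v_2)\le C(f)\, d_{p,\mu}(v_1,v_2)$.

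The one point that deserves explicit mention — and the closest thing to an obstacle — is the order of quantifiers in the last step: a per-path inequality $A(P)\le C\,B(P)$ does pass to $\inf_P A(P)\le C\,\inf_P B(P)$, because for every path $P_0$ one has $\inf_P A(P)\le A(P_0)\le C\, B(P_0)$, and one then takes the infimum over $P_0$. One should also record that the compact set $S$ and the measure $\mu$ are precisely those produced by Theorem \ref{pietfunc}, so that $S\subset V$ is compact and $\mu\in\mathcal M(S)$ is a probability measure, exactly as the definition of a $p$-th power integral estimate requires; and that, by the last assertion of Theorem \ref{pietfunc}, an eccentric $p$-summing functional is automatically Lipschitz, so the standing Lipschitz hypothesis is no extra restriction in item 2.
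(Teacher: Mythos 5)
Your proof is correct and follows essentially the same route as the paper: item 1 by applying the Lipschitz bound along each path and passing to the infimum, and item 2 by invoking the integral domination from Theorem \ref{pietfunc} per pair of consecutive vertices, summing along the path, and then taking infima on both sides. Your explicit remarks on passing a per-path inequality through the infimum and on the measure $\mu$ coming from Theorem \ref{pietfunc} only make precise what the paper's proof does tersely.
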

\begin{proof}
	1. Consider a pair $v_1, v_2 \in V$ and take a path connecting them $v_1=v^1\to v^2, \cdots, v^{n-1} \to v^n=v_2.$
	Then
	$$
	\Big( \sum_{i=1}^{n-1}  \big| f(v^i)-f(v^{i+1}) \big|^p \, d \mu \Big)^{1/p} \le 
	\Big( \sum_{i=1}^{n-1}  Lip(f) \, d(v^i,v^{i+1})^p \, d \mu \Big)^{1/p},
	$$
	and so $E_p(f)(v_1,v_2) \le Lip(f) \, d \Big( \sum_{i=1}^{n-1}  (v^i,v^{i+1})^p \, d \mu \Big)^{1/p}.$ Since this happens for every path connecting $v_1$ and $v_2,$ we get the result.
	
	2. Fix $v_1, v_2 \in V.$ Consider  a path connecting them $v_1=v^1\to v^2, \cdots, v^{n-1} \to v^n=v_2.$ Since $f$ is eccentric $p-$summing, by Theorem \ref{pietfunc}  we find a set $S,$ a constant $C>0$ and a measure
	$\mu \in \mathcal M(S)$ such that 
	$$
	|f( v)- f(s) | \leq C(f) \Big( \int_S | d(v,w) - d(s,w) |^p \,  d\mu(w) \Big)^{1/p}, \quad v,s \in V.
	$$
	Writing the $p-$sum in both sides, we obtain
	$$
	\Big( \sum_{i=1}^{n-1}  \big| f(v^i)-f(v^{i+1}) \big|^p \, d \mu \Big)^{1/p} \le 
	\Big( C(f)^p \, \sum_{i=1}^{n-1}   \int_S | d(x,w) - d(y,w) |^p \,  d\mu(w) \Big)^{1/p}, 
	$$
	and so
	$$
	E_p(f)(v_1,v_2) \le C(f) \Big( \sum_{i=1}^{n-1}   \int_S | d(v^i,w) - d(v^{i-1},w) |^p \,  d\mu(w) \Big)^{1/p}.
	$$
	Now we only need to consider the infimum in the right hand side to get the desired inequality.
\end{proof}


Let us explicitly explain the application of the above results to graph analysis. Following the standard tool definition for metric graph analysis in applied science, we say that a function $I:V \to \mathbb R$ that quantitatively represents a certain property of the graph vertices is an index \cite{das}. It is usually assumed to be a Lipschitz function for a certain distance defined in the graph. In the metric modeling of graphs  (and in the modeling of metric spaces in general), it is common to use this type of functions to represent relevant properties of the graphs: for example, if one considers the graph of the cities of a region with the road connections between them, an index of interest could be given by the real function that maps the number of inhabitants of each city. Thus, such an index is defined as a Lipschitz functional on $V.$


For a given measure $\mu,$ let us consider the pseudometric $d_{p,\mu}.$  Two vertices $v_1$ and $v_2$ in $V$
are symmetric  with respect to $\mu$ if $d_{p,\mu}(v_1,v_2)=0.$

The meaning of this relation is clear: two vertices $v_1$ and $v_2$ are symmetric with respect to $\mu$ if the associate metric functions $w \mapsto d(v_1,w)$ and $w \mapsto d(v_2,w)$ are $\mu-$almost everywhere equal, that is $\int_S |d(v_1,w)- d(v_2,w)|^p d \mu=0.$ The term symmetric is explained by the fact that, if this holds, then the vertices $v_1$ and $v_2$ have similar distribution of distances except in a set of vertices $w$ that is $\mu-$null. 


Recall that, in the context of metric modelling, an index $I$ is a real Lipschitz function $I: V \to \mathbb R.$
Next result gives the formal property that characterizes the existence of $I-$constant paths for a certain index $I.$ The proof is a straightforward application of Theorem \ref{t2}, which provides the existence of a measure $\mu$ and a domination of $E_p(f)$ by $d_{p\,mu}(f).$

\begin{Corollary}
	Let $I:V \to \mathbb R$ be an index on a weighted undirected connected graph $G=(V,E).$ If $I$ is eccentric $p-$summing, then there is a compact set $S$ and a probability measure $\mu \in \mathcal M(S)$ such that if two vertices in $V$ are symmetric with respect to $\mu$, then there is a path connecting $v_1$ and $v_2$ such that $I(v_1)=I(v)=I(v_2)$ for all vertices $v$ in the path.
\end{Corollary}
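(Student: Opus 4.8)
The plan is to feed $I$ into Theorem~\ref{t2}(2) and then read off the symmetry hypothesis through the chain of inequalities it provides, using finally the standing assumption that, in the graphs we consider, any two vertices are joined by only finitely many paths.

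First, since $I$ is an eccentric $p-$summing functional, Theorem~\ref{t2}(2) produces a compact subset $S \subseteq V$ and a probability measure $\mu \in \mathcal M(S)$ for which
$$
E_p(I)(v_1,v_2) \le C(I) \, d_{p,\mu}(v_1,v_2), \qquad v_1,v_2 \in V,
$$
where $C(I)$ is the eccentric $p-$summing constant of $I.$ This $S$ and this $\mu$ are the ones asserted in the statement.

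Next, take $v_1,v_2 \in V$ symmetric with respect to $\mu,$ that is, $d_{p,\mu}(v_1,v_2)=0.$ The displayed domination forces $E_p(I)(v_1,v_2)=0,$ i.e.\ the infimum over all paths joining $v_1$ to $v_2$ of $\big(\sum_{i=1}^{n-1}|I(v^i)-I(v^{i+1})|^p\big)^{1/p}$ is zero. The crucial point is that this infimum is attained: deleting a cycle from a walk removes a block of non-negative terms and hence never increases $\sum_i|I(v^i)-I(v^{i+1})|^p,$ so the infimum over all paths equals the minimum over the finitely many \emph{simple} paths from $v_1$ to $v_2.$ Thus there is a genuine path $v_1=v^1\to v^2\to\cdots\to v^n=v_2$ with $\sum_{i=1}^{n-1}|I(v^i)-I(v^{i+1})|^p=0,$ whence $I(v^i)=I(v^{i+1})$ for every $i$ by non-negativity of the summands and, by transitivity along the path, $I(v_1)=I(v)=I(v_2)$ for every vertex $v$ on it.

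The only delicate step is the passage from infimum to minimum, which is exactly where the finiteness-of-paths hypothesis on the graphs enters; everything else is a direct substitution into the domination already established in Theorem~\ref{t2}. If one wanted to allow graphs in which infinitely many simple paths may join two vertices, this is precisely the place where an additional compactness or properness argument on the space of paths would be required.
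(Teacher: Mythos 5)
Your proof is correct and follows the paper's route exactly: apply Theorem \ref{t2}(2) to obtain $S$, $\mu$ and the domination $E_p(I)\le C(I)\,d_{p,\mu}$, and conclude from $d_{p,\mu}(v_1,v_2)=0$ that $E_p(I)(v_1,v_2)=0$. The paper treats the remaining step as immediate, while your cycle-deletion and finitely-many-paths argument correctly supplies the attainment of the infimum (using the standing finiteness-of-paths assumption of Section \ref{sec4}), which is the only point the paper leaves implicit.
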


The approximate version of this result also holds, in the sense that if the vertices $v_1$ and $v_2$ are not symmetric with respect to $\mu,$ but satisfy that $d_{p,\mu}(v_1,v_2)$ is small, we obtain that  the $p-$variation $E_p(I)$ is small because $E_p(I) \le C(f) \, d_{p\,mu}(I).$

To finish, let us return to the example of the graph of the cities of a region with the road connections between them, and give the weights of the edges by ordering the distances between them (e.g. 1 if the distance is 0 to 5 km, 2 if the distance is 6 to 10, and so on). Let us fix an index $I$ that orders the set of vertices (cities) by thousands of inhabitants: $I(v)=1$ if the city $v$ has from $1$ to $1000$ inhabitants, $I(v)=2$ if $v$ has from $1001$ to $2000$ inhabitants, and so on.
Suppose $I$ is controlled by an appropriate measure $\mu$ modeled in order to control the routes between two cities to reduce risk by restricting crossing large cities; for example, the support of the measure $\mu$ contains only the largest cities, so distances to small cities is irrelevant. Given that there exists a constant $R$ such that $E_p(I) \le R \, d_{p,\mu},$ then a small value of $d_{p,\mu}(v_1,v_2)$ for two small cities (populations less than $1000$ inhabitants, $I(v_1)=I(v_2)=1),$ means that we can find a path connecting $v_1$ and $v_2$ such that $E_p(I)(v_1,v_2)$ is also small (or $0$). For instance, if it is $0,$ it means that there is a path connecting cities $v_1$ and $v_2$ that crosses only towns of less than $1000$ inhabitants, since for all vertices that the path crosses, $I(v)=I(v_1)=1.$

\vspace{0.5cm}

%


\section{Conclusions}

In this paper, we introduce a new type of 
$p-$summability for Lipschitz maps between metric spaces, which we call to be eccentric $p-$summing. Through its detailed characterization, we explore the general summation properties of metric spaces, introducing some related integral inequalities for real Lipschitz functionals. These inequalities lead to factorization theorems for general Lipschitz operators. 
We establish some key definitions related to summing metrics for sequences of metric spaces, laying the groundwork for our new notion of operator summability.
Using them, we focus on proving the main results concerning summing Lipschitz operators, providing not only the associated factorization theorems but also discussing general cases with concrete examples to illustrate the breadth of applicability. These results extend classical summability concepts, offering new insights into the behavior of Lipschitz operators across diverse settings.

The final section of the paper addresses the application of these summing Lipschitz operators in the context of metric graphs. Given their potential applications in various fields, we show how the integral inequalities that characterize summing properties can be adapted to compact metric graphs. This extension provides a powerful new tool for investigating the concept of metric symmetry in graphs. Furthermore, we present examples and discuss new results regarding the characterization of approximate metric symmetry, utilizing analytic techniques in contrast to the classical algebraic approaches. By shifting the focus to analytic methods, we offer a new perspective on the study of symmetry in metric spaces, with implications for both theoretical research and practical applications in graph theory.

\vspace{1cm}

\section*{Acknowledgment}
	The first author was supported by a contract of the Programa de Ayudas de Investigaci\'on y Desarrollo (PAID-01-21), Universitat Polit\`ecnica de Val\`encia.
	This research was funded by the Agencia Estatal de Investigaci\'on, grant number PID2022-138342NB-I00.
	The research was funded by the European Union’s Horizon Europe research and innovation program under the Grant Agreement No. 101059609 (Re-Livestock).

\bibliographystyle{plainnat}
\bibliography{biblio_psumming}

\end{document}